\title{On some explicit evaluations of multiple zeta-star values}
\author{Shuichi Muneta}
\date{}
\begin{document}
\maketitle

\newtheorem{thm}{Theorem}
\newtheorem{cor}[thm]{Corollary}
\newtheorem{thmA}{Theorem}
\renewcommand{\thethmA}{\Alph{thmA}} 

\begin{abstract}
In this paper, we give some explicit evaluations of multiple zeta-star values 
which are rational multiple of powers of $\pi^2$.
\end{abstract}

\section{Main Results}
The multiple zeta value (MZV) is defined by the convergent series
\[\zeta(k_{1},k_{2},\ldots,k_{n})
  :=\sum_{m_{1}>m_{2}>\cdots>m_{n}>0}\frac{1}{m_{1}^{k_{1}}m_{2}^{k_{2}}{\cdots}m_{n}^{k_{n}}},\]
where $k_{1},k_{2},\ldots,k_{n}$ are positive integers and $k_{1}\geq 2$. 
The integers $k=k_{1}+k_{2}+\cdots+k_{n}$ and $n$ are called weight and depth respectively. 
Considerable amount of work on MZV's has been done in recent years from various aspects and interests. 
Among them, several explicit values are known for special index sets, as will be recalled below. 

In this paper, we give some evaluations of the \textit{multiple zeta-star value} (MZSV), which is 
defined by the following series similar to the MZV:
\[\zeta^*(k_{1},k_{2},\ldots,k_{n})
:=\sum_{m_{1} \geq m_{2} \geq \cdots \geq m_{n}>0} 
  \frac{1} {m_{1}^{k_{1}}m_{2}^{k_{2}} \cdots m_{n}^{k_{n}}},\]
where $k_{1},k_{2},\ldots,k_{n}$ satisfy the same condition as above. 
The MZSV can be expressed as a $\mathbb{Z}$-linear combination of MZV's, 
and vice versa. 

\begin{thmA}
For positive integers $m$,$n$, we have 
\begin{align*}
\lefteqn{\zeta^{*}(\underbrace{2m,2m,\cdots,2m}_{n})} \\
 &=\left \{\sum_{ \begin{subarray}{c} n_{0} + \cdots + n_{m-1} = mn \\ n_{i} \geq 0 \end{subarray} }  
   (-1)^{m(n-1)} \left(\prod _{k=0}^{m-1} \frac{(2^{2n_{k}}-2)B_{2n_{k}}} {(2n_{k})!} \right)
   \exp \left(\frac{2{\pi}i}{m} \sum_{l=0}^{m-1} ln_{l} \right) \right \} \pi^{2mn}.
\end{align*}
\label{thma}
\end{thmA}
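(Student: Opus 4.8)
The plan is to assemble the numbers $\zeta^{*}(\underbrace{2m,\ldots,2m}_{n})$, for fixed $m$ and varying $n$, into a single generating function and then extract the coefficient in closed form. Directly from the definition one has
\[\zeta^{*}(\underbrace{2m,\ldots,2m}_{n})=\sum_{j_{1}\geq j_{2}\geq\cdots\geq j_{n}\geq 1}\frac{1}{(j_{1}j_{2}\cdots j_{n})^{2m}},\]
which is the complete homogeneous symmetric function of degree $n$ in the variables $1,2^{-2m},3^{-2m},\ldots$. Expanding each factor of $\prod_{j\geq 1}(1-x^{2m}/j^{2m})^{-1}$ as a geometric series and collecting the terms of total degree $2mn$ therefore gives, for $|x|<1$,
\[\sum_{n\geq 0}\zeta^{*}(\underbrace{2m,\ldots,2m}_{n})\,x^{2mn}=\prod_{j\geq 1}\frac{1}{1-x^{2m}/j^{2m}} .\]
I would begin by checking this identity carefully, in particular the absolute and locally uniform convergence of the product near $x=0$ (note it does not converge $x$-adically, so some analytic input is genuinely needed), since this is what justifies the rearrangements below.

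Next I would factor the denominator over roots of unity. Set $\omega=e^{\pi i/m}$, a primitive $2m$-th root of unity, so that $\omega^{2}=e^{2\pi i/m}$ runs over all $m$-th roots of unity; pairing the factor attached to $\omega^{l}$ with the one attached to $\omega^{l+m}=-\omega^{l}$ yields the polynomial identity $1-x^{2m}/j^{2m}=\prod_{l=0}^{m-1}\bigl(1-\omega^{2l}x^{2}/j^{2}\bigr)$. Using absolute convergence to interchange the product over $j$ with the finite product over $l$, and then applying Euler's product $\sin(\pi w)=\pi w\prod_{j\geq 1}(1-w^{2}/j^{2})$ to each inner product, I obtain
\[\sum_{n\geq 0}\zeta^{*}(\underbrace{2m,\ldots,2m}_{n})\,x^{2mn}=\prod_{l=0}^{m-1}\frac{\pi\,\omega^{l}x}{\sin(\pi\,\omega^{l}x)} .\]
Since $w\mapsto\pi w/\sin(\pi w)$ is even, each factor is genuinely a function of $x^{2}$ and the choice of square root $\omega^{l}x$ is immaterial.

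It then remains to expand and read off a coefficient. Substituting the classical series
\[\frac{w}{\sin w}=\sum_{k\geq 0}\frac{(-1)^{k-1}(2^{2k}-2)B_{2k}}{(2k)!}\,w^{2k}\]
into each factor, with $w=\pi\omega^{l}x$ (so that $w^{2k}=\pi^{2k}\omega^{2lk}x^{2k}$), multiplying out the finite product over $l$, and taking the coefficient of $x^{2mn}$, I get a sum over tuples $(n_{0},\ldots,n_{m-1})$ of nonnegative integers with $n_{0}+\cdots+n_{m-1}=mn$ of the term $\bigl(\prod_{l=0}^{m-1}\tfrac{(-1)^{n_{l}-1}(2^{2n_{l}}-2)B_{2n_{l}}}{(2n_{l})!}\bigr)\pi^{2mn}\,\omega^{2\sum_{l}ln_{l}}$, the power of $\pi$ being $\pi^{2mn}$ because $\sum_{l}n_{l}=mn$. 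Finally $\prod_{l=0}^{m-1}(-1)^{n_{l}-1}=(-1)^{(\sum_{l}n_{l})-m}=(-1)^{mn-m}=(-1)^{m(n-1)}$ is independent of the tuple, and $\omega^{2\sum_{l}ln_{l}}=\exp\!\bigl(\tfrac{2\pi i}{m}\sum_{l=0}^{m-1}ln_{l}\bigr)$; collecting everything reproduces the asserted formula.

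The argument has no real conceptual obstacle: the only points that require attention are the two product manipulations (factoring $\prod_{j}(1-x^{2m}/j^{2m})$ over $2m$-th roots of unity and swapping the order of the products, both valid for small $|x|$ by absolute convergence) and the bookkeeping with the roots of unity and the Bernoulli-number signs. As a consistency check one may observe that the same computation forces the coefficient of $x^{2r}$ to vanish whenever $m\nmid r$, though this is not needed for the evaluation.
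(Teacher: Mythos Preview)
Your proof is correct and is essentially the same argument as the paper's: both identify $\sum_{n\ge 0}\zeta^{*}(\{2m\}^{n})x^{2mn}$ with $\prod_{j\ge 1}(1-x^{2m}/j^{2m})^{-1}$, factor over the $m$-th roots of unity to rewrite this as $\prod_{l=0}^{m-1}\pi\omega^{l}x/\sin(\pi\omega^{l}x)$, and then expand using the Bernoulli-number series for $w/\sin w$ and compare coefficients. The only cosmetic difference is the direction of presentation---the paper starts from the product of cosecants and shows it equals the generating function, while you start from the generating function and arrive at the cosecant product---but the computation is the same.
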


\begin{thmA}
For positive integer $n$, we have
\begin{align*}
\lefteqn{\zeta^{*}(\underbrace{3,1,\cdots,3,1}_{2n})} \\
&=\sum_{i=0}^{n} \Bigg\{ \frac{2}{(4i+2)!} 
  \sum_{\begin{subarray}{c} n_{0} + n_{1} = 2(n-i) \\ n_{0}, n_{1} \geq 0 \end{subarray} } 
  (-1)^{n_{1}} \frac{ (2^{2n_{0}}-2) B_{2n_{0}} } {(2n_{0})!} 
  \frac{ (2^{2n_{1}}-2) B_{2n_{1}} } {(2n_{1})!} \Bigg\} {\pi}^{4n}.
\end{align*}
In particular,
\[\zeta^{*}(\underbrace{3,1,\cdots,3,1}_{2n}) \in \mathbb{Q} \times {\pi}^{4n}.\]
\label{thmb}
\end{thmA}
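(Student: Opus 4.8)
\medskip
\noindent\emph{Proof proposal.}
The plan is to reduce the statement to a single convolution identity and then invoke Theorem~\ref{thma}. Throughout write $\{3,1\}^{n}$ for the string $3,1$ repeated $n$ times, $\{k\}^{n}$ for $k$ repeated $n$ times, and put $\zeta(\emptyset)=\zeta^{*}(\emptyset)=1$. First I would show that the asserted formula is equivalent to
\[
\zeta^{*}(\{3,1\}^{n})=\sum_{i=0}^{n}\zeta(\{3,1\}^{i})\,\zeta^{*}(\{4\}^{n-i}),
\]
or, in generating-function form, to $\sum_{n\ge0}\zeta^{*}(\{3,1\}^{n})x^{4n}=(\cosh\pi x-\cos\pi x)/(\sin\pi x\,\sinh\pi x)$. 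The equivalence rests on two facts. First, the elementary identity $\sum_{k\ge0}\frac{(2^{2k}-2)B_{2k}}{(2k)!}t^{2k}=t\coth t-t\coth(t/2)=-t/\sinh t$ shows that the series $\sum_{k}\frac{(2^{2k}-2)B_{2k}}{(2k)!}t^{2k}$ and $\sum_{k}(-1)^{k}\frac{(2^{2k}-2)B_{2k}}{(2k)!}t^{2k}$ are $-t/\sinh t$ and $-t/\sin t$ respectively, so their product $t^{2}/(\sin t\,\sinh t)$ is a power series in $t^{4}$ whose coefficient of $t^{4j}$ is, by Theorem~\ref{thma} with $m=2$, exactly $\zeta^{*}(\{4\}^{j})/\pi^{4j}$. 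Second, the classical evaluation $\zeta(\{3,1\}^{i})=2\pi^{4i}/(4i+2)!$ gives $\sum_{i}\zeta(\{3,1\}^{i})x^{4i}=(\cosh\pi x-\cos\pi x)/(\pi^{2}x^{2})$. Multiplying these two series produces precisely the bracketed sum of the theorem (times $\pi^{4n}$), so the displayed formula and the convolution identity coincide; and once the convolution identity is known, the final assertion $\zeta^{*}(\{3,1\}^{n})\in\mathbb{Q}\times\pi^{4n}$ is immediate, Bernoulli numbers being rational.

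To prove the convolution identity I would pass to iterated integrals. Writing $\omega_{0}=dt/t$, $\omega_{1}=dt/(1-t)$ and $\eta:=\omega_{0}+\omega_{1}=dt/(t(1-t))$, one first records the star analogue of the iterated-integral representation,
\[
\zeta^{*}(k_{1},\ldots,k_{r})=\int_{0}^{1}\omega_{0}^{\,k_{1}-1}\,\eta\;\omega_{0}^{\,k_{2}-1}\,\eta\cdots\eta\;\omega_{0}^{\,k_{r}-1}\,\omega_{1},
\]
which is the integral form of the standard expansion of $\zeta^{*}$ as a sum of MZV's --- the net effect being that each internal $\omega_{1}$ is promoted to $\eta$. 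For $\{3,1\}^{n}$ this reads $\zeta^{*}(\{3,1\}^{n})=\int_{0}^{1}W_{n}$ with $W_{n}=\omega_{0}^{2}\,\eta\,(\eta\,\omega_{0}^{2}\,\eta)^{n-1}\,\omega_{1}$. The decisive move is the substitution $t=u/(1+u)$, under which $\eta\mapsto du/u$, $\omega_{1}\mapsto du/(1+u)$, $\omega_{0}\mapsto du/u-du/(1+u)$, and $[0,1]$ is carried to $[0,\infty)$; it converts the $\eta$-chains that glue consecutive $\{3,1\}$-blocks into plain strings in $du/u$. Assembling $\sum_{n}W_{n}x^{4n}$ into a generating series of words and integrating termwise should then split it into the product of the three elementary factors above: the $\omega_{0}^{2}\cdots\omega_{0}^{2}$ skeleton reproduces $(\cosh\pi x-\cos\pi x)/(\pi^{2}x^{2})$, while the free copies of $du/u$, after writing $\int_{0}^{\infty}=\int_{0}^{1}+\int_{1}^{\infty}$ and applying $u\mapsto1/u$ to the second integral, yield the symmetric factors $\pi x/\sin\pi x$ and $\pi x/\sinh\pi x$.

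The main obstacle is exactly this separation step. After the substitution $W_{n}$ becomes a signed sum of $4^{n}$ words in $du/u$ and $du/(1+u)$, and one must show that the regularised iterated integrals of these recombine, inside the generating series, into the clean product above --- the delicate part being the bookkeeping around the two logarithmic singularities $0$ and $-1$ and the $u\mapsto1/u$ symmetry, much as in the classical iterated-integral proof that $\zeta(\{3,1\}^{n})=2\pi^{4n}/(4n+2)!$. Should this route prove unwieldy, an alternative is to establish the convolution identity directly by induction on $n$ at the level of the defining nested sums: isolate the maximal initial run $m_{1}=\cdots=m_{\ell}>m_{\ell+1}$ of equal summation indices in $\zeta^{*}(\{3,1\}^{n})$, sum those $\ell$ coordinates without the constraint and correct by subtraction, and collect the outcome according to the parity of $\ell$; this should recover $\sum_{i}\zeta(\{3,1\}^{i})\,\zeta^{*}(\{4\}^{n-i})$, the one delicate point being the strict inequality at the junction between the absorbed block and the tail. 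With the convolution identity in hand, the reformulation of the first paragraph finishes the proof, the rationality corollary included.
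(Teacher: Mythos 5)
Your reduction is sound and is in fact exactly the paper's endgame: the theorem follows from the convolution identity $\zeta^{*}(\{3,1\}^{n})=\sum_{i=0}^{n}\zeta(\{3,1\}^{i})\,\zeta^{*}(\{4\}^{n-i})$ together with Theorem~\ref{thm2} and the $m=2$ case of Theorem~\ref{thma}; your Bernoulli/generating-function bookkeeping ($-t/\sinh t$, $-t/\sin t$, $(\cosh\pi x-\cos\pi x)/(\pi^{2}x^{2})$) checks out, as does the star iterated-integral word $\omega_{0}^{2}\eta(\eta\omega_{0}^{2}\eta)^{n-1}\omega_{1}$ and the effect of $t=u/(1+u)$ on the forms. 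The genuine gap is that the convolution identity itself --- the entire substance of the theorem --- is never proved. The paper devotes a separate result to it (Theorem~\ref{thm6}): the algebraic identity $S((z_{a}z_{b})^{n})=\sum_{i=0}^{n}(z_{a}z_{b})^{i}*S(z_{a+b}^{n-i})$, established by a simultaneous induction with the companion identity \eqref{eq:3} inside Hoffman's harmonic algebra, after which applying the homomorphism $Z$ with $a=3$, $b=1$ yields the convolution. You offer two routes to it and carry out neither. The iterated-integral route stalls precisely at the step you flag: after the substitution you have a signed sum of $4^{n}$ words over the two singular points $0$ and $-1$ on $[0,\infty)$, some of which diverge individually (words beginning with $du/u$ at $0$, or with bad behaviour at $\infty$ after $u\mapsto 1/u$), and you give no regularisation scheme nor any mechanism forcing the claimed factorisation into the three elementary series --- ``should then split'' is an assertion, not an argument. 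The alternative nested-sum induction is likewise only an outline, with the decisive point (the strict inequality at the junction between the absorbed initial block and the tail, i.e.\ exactly why the correction terms reassemble into $\zeta(\{3,1\}^{i})\cdot\zeta^{*}(\{4\}^{n-i})$ rather than some other stuffle-type mixture) explicitly left open.

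To close the gap you could either push through one of your two sketches in full detail, or, more economically, prove the identity at the level of words as the paper does: using $S(w_{1}w_{2})=S_{1}(w_{1})S(w_{2})$ one gets the expansion $S(z_{k_{1}}\cdots z_{k_{n}})=\sum_{j=1}^{n}z_{k_{1}+\cdots+k_{j}}S(z_{k_{j+1}}\cdots z_{k_{n}})$, and a double induction on $n$ (proving \eqref{eq:2} and \eqref{eq:3} simultaneously, repeatedly using $S(z_{a+b}^{n})=\sum_{j\ge1}z_{(a+b)j}S(z_{a+b}^{n-j})$ and the recursive definition of the harmonic product) gives the identity; this is purely combinatorial and avoids all analytic regularisation issues. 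As it stands, your proposal correctly isolates what must be proved but does not prove it.
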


\begin{thmA}
Let $n$ be a positive integer, and let $I_{2n}$ denote the set of all $2n+1$ possible insertions 
of the number $2$ in the string 
$\{ \underbrace{3,1,\ldots,3,1}_{2n} \}$. Then we have 
\[\sum_{\vec{s}_{2n}\in I_{2n}} \! \zeta^{*} (\vec{s}_{2n}) 
=\sum_{k=0}^{n} \left\{ \frac{ 2^{4k+3} B_{4k+2} }{ (4k+2)! } 
 \sum_{i=0}^{n-k} \frac{ \alpha_{n-k-i} }{ (4i+2)! } 
 - \frac{ \alpha_{n-k} }{ (4k+3)! } \right\} \pi^{4n+2} ,\]
where
\begin{align*}
\alpha_{n} 
&=\sum_{\begin{subarray}{c} n_{0} + n_{1} = 2n \\ n_{0}, n_{1} \geq 0 \end{subarray} }
  (-1)^{n_{1}} \frac{ (2^{2n_{0}}-2) B_{2n_{0}} } {(2n_{0})!} 
               \frac{ (2^{2n_{1}}-2) B_{2n_{1}} } {(2n_{1})!}.
\end{align*}
In particular,
\[\sum_{\vec{s}_{2n}\in I_{2n}} \zeta^{*} (\vec{s}_{2n}) \in \mathbb{Q} \times \pi^{4n+2}.\]
\label{thmc}
\end{thmA}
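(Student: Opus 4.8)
The plan is to reduce the assertion to one generating-function identity and then read off Taylor coefficients, calling on Theorems~\ref{thma} and~\ref{thmb} for two of the ingredients. Write $(k_1,\dots,k_{2n})=\{3,1\}^n$ (the string $3,1,\dots,3,1$ with $2n$ entries), so that the $2n+1$ elements of $I_{2n}$ correspond to inserting a $2$ into one of the $2n+1$ slots $j=0,1,\dots,2n$. In the nested-sum definition of $\zeta^{*}$, inserting a $2$ into slot $j$ means inserting an index $m'$ with exponent $2$ into the chain $m_1\ge\cdots\ge m_{2n}\ge 1$ subject to $m_j\ge m'\ge m_{j+1}$, with the conventions $m_0=\infty$, $m_{2n+1}=1$. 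Summing over $j$, a short count of multiplicities (for a fixed chain each positive integer $m'$ arises with multiplicity $1+\#\{i:m_i=m'\}$) gives, for all integers $m_1\ge\cdots\ge m_{2n}\ge1$,
\[
\sum_{j=0}^{2n}\ \sum_{m_j\ge m'\ge m_{j+1}}\frac{1}{(m')^{2}}\;=\;\zeta(2)+\sum_{i=1}^{2n}\frac{1}{m_i^{2}},
\]
and hence the exact reduction
\[
\sum_{\vec{s}_{2n}\in I_{2n}}\zeta^{*}(\vec{s}_{2n})\;=\;\zeta(2)\,\zeta^{*}(\{3,1\}^{n})\;+\;A_n+B_n,
\]
where $A_n=\sum_{l=1}^{n}\zeta^{*}(\{3,1\}^{l-1},5,1,\{3,1\}^{n-l})$ collects the terms in which a ``$3$'' is raised by $2$ and $B_n=\sum_{l=1}^{n}\zeta^{*}(\{3,1\}^{l-1},3,3,\{3,1\}^{n-l})$ those in which a ``$1$'' is raised by $2$.

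For the first summand I invoke Theorem~\ref{thmb}; translated through the elementary expansions recalled below, it is equivalent to $\sum_{n\ge0}\zeta^{*}(\{3,1\}^{n})x^{4n}=\dfrac{\cosh\pi x-\cos\pi x}{\sinh\pi x\,\sin\pi x}$. The remaining task is to compute $\sum_{n\ge0}(A_n+B_n)x^{4n}$. For this I would use the iterated-integral representation in which $\zeta^{*}(k_1,\dots,k_r)$ is the integral of the word obtained from $x^{k_1-1}y\,x^{k_2-1}y\cdots x^{k_r-1}y$ by replacing each internal $y$ with $x+y$; raising one $k_i$ by $2$ replaces the factor $x^{k_i-1}$ by $x^{k_i+1}$, i.e.\ inserts $x^2$ at one ``block position'' of the word $W_n$ attached to $\zeta^{*}(\{3,1\}^{n})$. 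The two-variable series $\sum_{a,b\ge0}\zeta^{*}(\{3,1\}^{a},5,1,\{3,1\}^{b})u^{4a}v^{4b}$ and $\sum_{a,b\ge0}\zeta^{*}(\{3,1\}^{a},3,3,\{3,1\}^{b})u^{4a}v^{4b}$ are then ``one-letter perturbations'' of the series whose iterated integral is supplied by Theorem~\ref{thmb}; using also Theorem~\ref{thma} with $m=1$, i.e.\ $\sum_{k\ge0}\zeta^{*}(\{2\}^{k})x^{2k}=\dfrac{\pi x}{\sin\pi x}$, I expect each to evaluate to an explicit rational combination of products of $\sin\pi x,\cos\pi x,\sinh\pi x,\cosh\pi x$ and monomials, so that setting $v=u$ and adding yields the desired closed form.

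Putting the three pieces together, the full generating function should come out as
\[
\sum_{n\ge0}\Bigl(\sum_{\vec{s}_{2n}\in I_{2n}}\zeta^{*}(\vec{s}_{2n})\Bigr)x^{4n}\;=\;\frac{\pi\,(\sin\pi x+\sinh\pi x)(1-\cos\pi x\,\cosh\pi x)}{2x\,\sin^{2}\pi x\,\sinh^{2}\pi x},
\]
after which one must match the right-hand side with the function attached to the stated formula; this part is routine. From $\dfrac{z}{\sinh z}=\sum_{k\ge0}\dfrac{(2-2^{2k})B_{2k}}{(2k)!}z^{2k}$ (and $z\mapsto iz$) one gets $\dfrac{z^{2}}{\sinh z\,\sin z}=\sum_{n\ge0}\alpha_n z^{4n}$; combining this with $\dfrac{\cosh z-\cos z}{2}=\sum_{k\ge0}\dfrac{z^{4k+2}}{(4k+2)!}$, $\dfrac{\sinh z-\sin z}{2}=\sum_{k\ge0}\dfrac{z^{4k+3}}{(4k+3)!}$ and $z(\coth z-\cot z)=\sum_{k\ge0}\dfrac{2^{4k+3}B_{4k+2}}{(4k+2)!}z^{4k+2}$, a direct trigonometric computation identifies the displayed function with
\[
\pi^{2}\sum_{n\ge0}\left(\sum_{k=0}^{n}\left\{\frac{2^{4k+3}B_{4k+2}}{(4k+2)!}\sum_{i=0}^{n-k}\frac{\alpha_{n-k-i}}{(4i+2)!}-\frac{\alpha_{n-k}}{(4k+3)!}\right\}\right)(\pi x)^{4n},
\]
which is exactly the claim. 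The final assertion $\sum_{\vec{s}_{2n}\in I_{2n}}\zeta^{*}(\vec{s}_{2n})\in\mathbb{Q}\times\pi^{4n+2}$ is then immediate, since every $\alpha_m$, every $B_{4k+2}$ and every factorial is rational.

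The main obstacle is the evaluation of $A_n$ and $B_n$. Because the iterated-integral map is not multiplicative on concatenation of words, one cannot simply factor $W_n$ at the inserted block, so a genuine generating-series manipulation is required—of the same nature as, but somewhat heavier than, the one behind Theorem~\ref{thmb}. An alternative route would be to deduce $\sum_{n\ge0}(A_n+B_n)x^{4n}$ from a $\zeta^{*}$-analogue of the Bowman--Bradley evaluation of $\sum_{\vec{s}_{2n}\in I_{2n}}\zeta(\vec{s}_{2n})$ together with the $\mathbb{Z}$-linear conversion between MZV's and MZSV's. Establishing the correct closed form for these auxiliary sums—equivalently, verifying the displayed generating-function identity—is where essentially all the work lies.
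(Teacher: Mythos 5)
Your reduction step is correct and nicely elementary: the multiplicity count does give
\[
\sum_{\vec{s}_{2n}\in I_{2n}}\zeta^{*}(\vec{s}_{2n})=\zeta(2)\,\zeta^{*}(\underbrace{3,1,\ldots,3,1}_{2n})+A_n+B_n ,
\]
and your final bookkeeping is also sound: I checked that your conjectured generating function $\pi(\sin\pi x+\sinh\pi x)(1-\cos\pi x\cosh\pi x)/(2x\sin^{2}\pi x\,\sinh^{2}\pi x)$ is indeed equivalent to the stated formula, via $z^{2}/(\sin z\sinh z)=\sum_n\alpha_n z^{4n}$ and the elementary expansions you list. But the heart of the argument is missing, and you say so yourself: nothing in the proposal actually evaluates $A_n+B_n$ (equivalently, proves the displayed generating-function identity). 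Note that $A_n+B_n=\sum_{\vec{s}_{2n}\in I_{2n}}\zeta^{*}(\vec{s}_{2n})-\zeta(2)\zeta^{*}(\{3,1\}^{n})$, so a closed form for $A_n+B_n$ is equivalent to the theorem itself; as written the plan is essentially circular at its core. The routes you gesture at are not obviously tractable: the individual values $\zeta^{*}(\{3,1\}^{a},5,1,\{3,1\}^{b})$ and $\zeta^{*}(\{3,1\}^{a},3,3,\{3,1\}^{b})$ (and their two-variable generating series) have no known closed evaluation, Theorems \ref{thma} and \ref{thmb} do not supply them, and even on the MZV side only symmetric insertion sums, not these individual ``one-letter perturbations,'' are known; likewise the ``$\zeta^{*}$-analogue of Bowman--Bradley'' you would want to quote is precisely what is being proved.

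For comparison, the paper closes exactly this gap algebraically: Theorem \ref{thm7} (identity (\ref{eq:8}) in Hoffman's harmonic algebra, proved by a double induction together with (\ref{eq:2})--(\ref{eq:7})) yields, upon applying $Z$ with $a=3$, $b=1$, $c=2$,
\[
\sum_{\vec{s}_{2n}\in I_{2n}}\zeta^{*}(\vec{s}_{2n})
=2\sum_{k=0}^{n}\zeta(4k+2)\,\zeta^{*}(\underbrace{3,1,\ldots,3,1}_{2n-2k})
-\sum_{k=0}^{n}\zeta^{*}(\underbrace{4,\ldots,4}_{n-k})\sum_{\vec{s}_{2k}\in I_{2k}}\zeta(\vec{s}_{2k}),
\]
after which Theorems \ref{thm1}, \ref{thm3}, \ref{thma} and \ref{thmb} give the stated coefficients. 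So the known inputs enter only through the MZV insertion sum of Theorem \ref{thm3} and the values $\zeta^{*}(\{4\}^{m})$, never through the unsymmetrized sums $A_n$, $B_n$. To complete your argument you would need to prove an identity of comparable strength — for instance a harmonic-algebra identity expressing $A_n+B_n$ (or your generating function) in terms of known quantities — and that is where essentially all the work of the paper lies; until then the proposal is a plausible plan with a genuine gap rather than a proof.
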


For later use, we recall the corresponding results for MZV's.
\begin{thm}[{[AK]}]
Let m, n be positive integers. Then we have
\[\zeta(\underbrace{2m,2m,\ldots,2m}_{n})=C_{n}^{(m)} \frac{(2\pi i)^{2mn}}{(2mn)!},\]
where $C_{n}^{(m)}$ is defined by the following recurrence relations:
\[C_{0}^{(m)}=1, \; C_{n}^{(m)}=\frac{1}{2n} \sum_{l=1}^{n} (-1)^{l}
\binom{2mn}{2ml}B_{2ml}C_{n-l}^{(m)} \quad  (n\geq 1),\]
where $B_{2n}$ are the classical Bernoulli numbers.
\label{thm1}
\end{thm}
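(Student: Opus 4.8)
\medskip
\noindent\textbf{Proof proposal.}
The plan is to compute the generating function $\sum_{n\ge 0}\zeta(\{2m\}^{n})t^{n}$ as an infinite product, extract Newton's identity, and then recognise the stated recursion for $C_{n}^{(m)}$ as nothing more than Newton's identity rewritten through Euler's formula for $\zeta(2k)$. (Throughout, $\{2m\}^{n}$ abbreviates the string $2m,\ldots,2m$ of length $n$, and $\zeta(\{2m\}^{0}):=1$.)

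First I would record that
\[
\zeta(\underbrace{2m,\ldots,2m}_{n})=\sum_{k_{1}>k_{2}>\cdots>k_{n}\ge 1}\frac{1}{(k_{1}k_{2}\cdots k_{n})^{2m}}
\]
is the $n$-th elementary symmetric function of the numbers $k^{-2m}$, $k\ge 1$; since $m\ge 1$ the series $\sum_{k\ge1}k^{-2m}$ converges, so the product $\prod_{k\ge1}(1+t/k^{2m})$ converges (uniformly on compacta) and
\[
P(t):=\sum_{n\ge 0}\zeta(\{2m\}^{n})\,t^{n}=\prod_{k\ge 1}\left(1+\frac{t}{k^{2m}}\right).
\]
Logarithmic differentiation then gives, after expanding each $\dfrac{1}{k^{2m}+t}$ as a geometric series in $t$ and summing over $k$,
\[
\frac{P'(t)}{P(t)}=\sum_{k\ge1}\frac{1}{k^{2m}+t}=\sum_{j\ge 1}(-1)^{j-1}\zeta(2mj)\,t^{j-1}.
\]
Comparing coefficients of $t^{n}$ in $P'(t)=P(t)\cdot\sum_{j\ge1}(-1)^{j-1}\zeta(2mj)t^{j-1}$ yields Newton's identity
\[
(n+1)\,\zeta(\{2m\}^{n+1})=\sum_{j=1}^{n+1}(-1)^{j-1}\zeta(2mj)\,\zeta(\{2m\}^{n+1-j}).
\]

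Next I would substitute Euler's evaluation $\zeta(2k)=\dfrac{(-1)^{k+1}(2\pi)^{2k}B_{2k}}{2\,(2k)!}$. Putting $2k=2mj$ and using $i^{2mj}=(-1)^{mj}$ turns this into the convenient shape $\zeta(2mj)=-\dfrac{B_{2mj}\,(2\pi i)^{2mj}}{2\,(2mj)!}$. Write $\zeta(\{2m\}^{n})=c_{n}\dfrac{(2\pi i)^{2mn}}{(2mn)!}$, so that $c_{0}=1$. Plugging these two substitutions into Newton's identity, every summand acquires the common factor $(2\pi i)^{2m(n+1)}$, which cancels throughout; multiplying through by $(2m(n+1))!$ produces the binomial coefficients $\binom{2m(n+1)}{2mj}$, and the identity collapses to
\[
(n+1)\,c_{n+1}=\frac{1}{2}\sum_{j=1}^{n+1}(-1)^{j}\binom{2m(n+1)}{2mj}B_{2mj}\,c_{n+1-j}.
\]
Relabelling $n+1\mapsto n$, this is precisely the recursion defining $C_{n}^{(m)}$; since also $c_{0}=1=C_{0}^{(m)}$ and the recursion determines the sequence uniquely, induction gives $c_{n}=C_{n}^{(m)}$ for all $n$, which is the assertion.

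I do not expect a genuine obstacle here: the two places asking for care are the legitimacy of the infinite-product expansion (ensured by $m\ge 1$) and the sign bookkeeping in passing from $(2\pi)^{2mj}$ to $(2\pi i)^{2mj}$, which is exactly what makes the powers of $2\pi i$ telescope and the rational factor emerge. In short, the content of the theorem is that the Arakawa--Kaneko recursion \emph{is} Newton's identity for the elementary symmetric functions of $\{k^{-2m}\}_{k\ge1}$, read off via Euler's formula.
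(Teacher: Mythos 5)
Your argument is correct: the identification of $\zeta(\{2m\}^{n})$ with the $n$-th elementary symmetric function of $\{k^{-2m}\}$, the logarithmic differentiation of $\prod_{k\ge 1}(1+t/k^{2m})$ giving Newton's identity, and the sign bookkeeping with Euler's formula all check out, and the resulting recursion is exactly the one defining $C_{n}^{(m)}$. Note, however, that the paper itself offers no proof of this statement to compare with --- it is quoted from the Arakawa--Kaneko lecture notes [AK] as a known evaluation; your generating-function/Newton's-identity derivation is the standard route to this recursion and is, in spirit, the MZV analogue of what the paper actually does prove in Section 3, where the zeta-star counterpart (Theorem A) is obtained from the infinite product $\prod_{n\ge 1}(1-x^{2m}/n^{2m})$ via the sine product and the cosecant expansion; there the product formula yields a closed-form coefficient directly, whereas your logarithmic derivative trades the closed form for the recursive description, which is precisely the shape of the cited result.
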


\begin{thm}[{[BBBL1]},{[BBBL2]}]
For any positive integer $n$, we have
\[\zeta(\underbrace{3,1,\ldots,3,1}_{2n})=\frac{2\pi^{4n}}{(4n+2)!}.\]
\label{thm2}
\end{thm}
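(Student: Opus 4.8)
The plan is to prove the equivalent generating-function identity
\[
F(x):=\sum_{n=0}^{\infty}\zeta(\underbrace{3,1,\ldots,3,1}_{2n})\,x^{4n}
=\frac{\cosh(\pi x)-\cos(\pi x)}{\pi^{2}x^{2}},
\]
from which the claim follows by reading off coefficients: since only the terms $(\pi x)^{2j}$ with $j$ odd survive in $\cosh(\pi x)-\cos(\pi x)=2\sum_{n\ge 0}\pi^{4n+2}x^{4n+2}/(4n+2)!$, dividing by $\pi^{2}x^{2}$ makes the coefficient of $x^{4n}$ equal to $2\pi^{4n}/(4n+2)!$. Thus the entire task is to identify $F$ with this closed form.

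I would reach $F$ through the reduction
\[
\zeta(\underbrace{3,1,\ldots,3,1}_{2n})=\frac{1}{2n+1}\,\zeta(\underbrace{2,2,\ldots,2}_{2n}),
\]
together with the classical evaluation $\zeta(\{2\}^{m})=\pi^{2m}/(2m+1)!$. The latter I would obtain from the Weierstrass product $\sin(\pi x)/(\pi x)=\prod_{k\ge1}(1-x^{2}/k^{2})$: comparing elementary symmetric functions of $\{1/k^{2}\}$ identifies $\sum_{m\ge0}(-1)^{m}\zeta(\{2\}^{m})x^{2m}$ with $\sin(\pi x)/(\pi x)$, whence $\zeta(\{2\}^{m})=\pi^{2m}/(2m+1)!$ and in particular $\zeta(\{2\}^{2n})=\pi^{4n}/(4n+1)!$. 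Granting the reduction, this gives $\zeta(\{3,1\}^{n})=\frac{\pi^{4n}}{(2n+1)(4n+1)!}=\frac{2\pi^{4n}}{(4n+2)!}$, as required.

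The hard part is the reduction itself. I would use the iterated-integral representation over the simplex $0<t_{1}<\cdots<t_{4n}<1$, in which $\zeta(\{3,1\}^{n})$ corresponds to the word $(\omega_{0}\omega_{0}\omega_{1}\omega_{1})^{n}$ and $\zeta(\{2\}^{2n})$ to the word $(\omega_{0}\omega_{1})^{2n}$, where $\omega_{0}=dt/t$ and $\omega_{1}=dt/(1-t)$. To connect these two distinct words I would assemble both families into generating series and introduce a one-parameter interpolation; differentiating under the integral sign should produce a first-order linear differential equation, with the factor $1/(2n+1)$ emerging after a single integration. Equivalently, one may compute the shuffle and harmonic (stuffle) products of $\zeta(2)$ against $\zeta(\{3,1\}^{j})$ and against $\zeta(\{2\}^{j})$, organize the two outcomes into generating series, and check that they satisfy the same differential equation with matching initial data. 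Showing that the $(\omega_{0}\omega_{0}\omega_{1}\omega_{1})^{n}$ integral collapses to $1/(2n+1)$ times the $(\omega_{0}\omega_{1})^{2n}$ integral is precisely where the analytic and combinatorial work concentrates, and is the step I expect to be most delicate.

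Finally, I would assemble the pieces: the symmetric-function computation pins down $\zeta(\{2\}^{2n})$, the integral/differential-equation argument supplies the factor $1/(2n+1)$, and their product yields both $F(x)=(\cosh\pi x-\cos\pi x)/(\pi^{2}x^{2})$ and the closed form. A convenient check along the way is the base case $n=1$, where the reduction predicts $\zeta(3,1)=\tfrac13\zeta(2,2)=\tfrac13\cdot\pi^{4}/120=\pi^{4}/360$, in agreement with $2\pi^{4}/6!$.
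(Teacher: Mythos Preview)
The paper does not prove this statement; it is quoted from [BBBL1, BBBL2] as a known result and then used as a black box in the proof of Theorem~\ref{thmb}. There is therefore no argument in the paper to compare your attempt against.

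Taken on its own, your proposal is a correct outline but not yet a proof. The easy pieces are fine: the Weierstrass product gives $\zeta(\{2\}^{m})=\pi^{2m}/(2m+1)!$, and if one grants the reduction $\zeta(\{3,1\}^{n})=\tfrac{1}{2n+1}\,\zeta(\{2\}^{2n})$ the claimed formula and the generating-function identity follow immediately. But that reduction \emph{is} the theorem, and you explicitly leave it open, describing only in broad strokes a possible attack via iterated integrals and an unspecified differential equation, or via comparing shuffle and stuffle products of $\zeta(2)$ against the two families. You do not say which differential equation arises, how to verify that both generating series satisfy it, or how the factor $1/(2n+1)$ is actually produced. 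The published proofs (the hypergeometric argument in [BBBL2], and the factorization of the generating series) do real work at precisely this step. Until you fill in that link, what you have is a plausible roadmap rather than a proof.
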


\begin{thm}[{[BBBL1]}]
Let $n$ be a positive integer, and let $I_{2n}$ denote the set of all $2n+1$ possible insertions 
of the number $2$ in the string 
$\{ \underbrace{3,1,\ldots,3,1}_{2n} \}$. Then
\[\sum_{\vec{s}_{2n}\in I_{2n}} \zeta(\vec{s}_{2n})=\frac{\pi^{4n+2}}{(4n+3)!}.\]
\label{thm3}
\end{thm}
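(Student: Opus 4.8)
The plan is to recast the identity through iterated integrals and then resolve it by a single generating-function relation fed by Theorem \ref{thm2}. Writing $\omega_0=dt/t$ and $\omega_1=dt/(1-t)$, every admissible $\zeta(\vec s)$ is the iterated integral over $1>t_1>\cdots>t_w>0$ of the word that sends each part $k$ to $\omega_0^{\,k-1}\omega_1$; under this dictionary the string $\{3,1\}^{n}$ becomes the binary word $(0011)^{n}$, and inserting a $2$ at one of the $2n+1$ composition gaps amounts to inserting the block $01$ (that is, $\omega_0\omega_1$) either at the very front or immediately after one of the $2n$ letters $1$. First I would record the two relevant generating functions. Summing Theorem \ref{thm2} and using $\sum_{n\ge0}y^{4n+2}/(4n+2)!=\tfrac12(\cosh y-\cos y)$ gives $f(x):=\sum_{n\ge0}\zeta(\{3,1\}^{n})x^{4n}=\frac{\cosh\pi x-\cos\pi x}{(\pi x)^{2}}$, while the classical evaluation $\zeta(\{2\}^{m})=\pi^{2m}/(2m+1)!$ shows that the right-hand side of the theorem is $\zeta(\{2\}^{2n+1})=\pi^{4n+2}/(4n+3)!$ and repackages as $\sum_{n\ge0}\zeta(\{2\}^{2n+1})x^{4n+2}=\frac{\sinh\pi x-\sin\pi x}{2\pi x}$. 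Thus the theorem is equivalent to the single identity $G(x)=\frac{\sinh\pi x-\sin\pi x}{2\pi x}$, where $G(x):=\sum_{n\ge0}S_n x^{4n+2}$ and $S_n:=\sum_{\vec s_{2n}\in I_{2n}}\zeta(\vec s_{2n})$.

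The heart of the argument is the first-order relation
\[ x\,G(x)=\frac{\pi^{2}}{2}\int_{0}^{x} t^{2} f(t)\,dt, \qquad\text{equivalently}\qquad S_n=\frac{\pi^{2}}{2(4n+3)}\,\zeta(\{3,1\}^{n}). \]
Granting this, substituting $f$ from Theorem \ref{thm2} gives $t^{2}f(t)=(\cosh\pi t-\cos\pi t)/\pi^{2}$, so $\frac{\pi^{2}}{2}\int_0^x t^2 f(t)\,dt=\frac12\int_0^x(\cosh\pi t-\cos\pi t)\,dt=\frac{\sinh\pi x-\sin\pi x}{2\pi}$, whence $G(x)=\frac{\sinh\pi x-\sin\pi x}{2\pi x}$; comparing the coefficient of $x^{4n+2}$ yields $S_n=\pi^{4n+2}/(4n+3)!$, which is the assertion. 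The factor $t^{2}$ reflects that the inserted part $2$ raises the weight by $2$, and the prefactor $\pi^{2}/2$ is the normalization carried by the inserted block.

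To prove the displayed relation I would pass to the Chen (KZ-type) generating series of iterated integrals for the periodic word $(0011)^{n}$: let $\mathcal L(z;x)$ collect, with $x^{4}$ marking each completed $0011$-block, the iterated integrals on $[0,z]$ of these words, together with the partial-state companions that record how many letters into the current block one has read. The block structure makes these satisfy a fixed linear ODE in $z$ driven by $\omega_0,\omega_1$, with $f(x)=\mathcal L(1;x)$. The admissible slots are exactly the $2n$ positions just after a letter $1$ together with the front; equivalently the insertion of $\omega_0\omega_1$ is permitted only at a block boundary or in the single slot between the two letters $1$ of a block, and never inside the initial $00$ of a block. Summing the inserted block over these states is therefore a localized first-order variation of $\mathcal L$, and by variation of parameters (Duhamel) it equals a single integral over the height $t\in[0,1]$ of the inserted block, of the shape $\int_0^1[\text{series reaching an admissible state by }t]\cdot(\omega_0\omega_1)(t)\cdot[\text{series continuing to }1]$. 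The crux, and the main obstacle, is to show that precisely because only these two states are admissible the incoming and outgoing partial series recombine into $f$ itself, so that the Duhamel integral collapses to $\frac{\pi^{2}}{2}\int_0^x t^2 f(t)\,dt$ once the weight variable is tracked; the constant is pinned down by the base case $S_0=\zeta(2)=\pi^{2}/6=\pi^2/3!$. A parallel route is the stuffle identity $\zeta(2)\,\zeta(\{3,1\}^{n})=S_n+\sum_{i}\zeta(\ldots,s_i+2,\ldots)$, which expresses $S_n$ through $\zeta(\{3,1\}^{n})$ and the "increment" multiple zeta values; there the same relation follows once that increment sum is independently evaluated, which is the corresponding obstacle in that approach.
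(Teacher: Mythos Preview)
The paper does not prove Theorem~\ref{thm3} at all; it is quoted from \textnormal{[BBBL1]} and used as input to Theorem~\ref{thmc}. There is therefore no proof in the paper to compare your attempt against.

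On its own merits, your write-up is a plan rather than a proof, and you yourself flag the missing step. The generating-function reformulation is correct: with Theorem~\ref{thm2} one has $f(x)=\dfrac{\cosh\pi x-\cos\pi x}{(\pi x)^{2}}$ and the claim $S_n=\pi^{4n+2}/(4n+3)!$ repackages as $G(x)=\dfrac{\sinh\pi x-\sin\pi x}{2\pi x}$. But your ``heart of the argument'', the relation $S_n=\dfrac{\pi^{2}}{2(4n+3)}\,\zeta(\{3,1\}^{n})$, is \emph{equivalent} to the theorem once Theorem~\ref{thm2} is known; you have restated what must be shown, not reduced it. The Duhamel sketch never establishes the only nontrivial point---that the partial iterated-integral states admissible for insertion ``recombine into $f$ itself''---and you explicitly call this ``the main obstacle''. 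Likewise, the stuffle route $\zeta(2)\,\zeta(\{3,1\}^{n})=S_n+\sum_i\zeta(\ldots,s_i{+}2,\ldots)$ is left with the companion sum unevaluated. Until one of those obstacles is cleared, nothing has been proved.

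A smaller remark: in your Duhamel description the variable $t$ plays two roles---the position in $[0,1]$ of the inserted block, and the formal weight-marking variable in $\int_0^x t^2 f(t)\,dt$. As written, the passage from the iterated-integral insertion to that particular antiderivative is not justified; any completed argument would have to make that translation explicit.
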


\section{Algebraic setup}
We use the algebraic setup of MZV's that was developed by Hoffman[H2]. 
Consider the non-commutative polynomial ring
\[\mathfrak{H}:=\mathbb{Q}\left\langle x,y \right\rangle\]
in two indeterminates $x,y$. We refer to monomials in $x$ and $y$ as words. We also define subrings
\[\mathfrak{H}^1 := \mathbb{Q} + \mathfrak{H} y\]
and
\[\mathfrak{H}^0 := \mathbb{Q} + x \mathfrak{H} y.\]
For an integer $k\geq 1$, put $z_{k} = x^{k-1}y$. 
Then the ring $\mathfrak{H}^1$ is freely generated by $z_{k} \, (k=1,2,3,\ldots)$. 
When $k \geq 2$, $z_{k}$ is contained $\mathfrak{H}^0$.  

Now define the evaluation map $Z:\mathfrak{H}^0 \longrightarrow \mathbb{R}$ by setting
\[Z(z_{k_{1}} z_{k_{2}} \cdots z_{k_{n}}) = \zeta(k_{1},k_{2},\ldots,k_{n})\]
on generators and extending it $\mathbb{Q}$-linearly. 

We define the harmonic product $*$ on $\mathfrak{H}^1$ inductively by
\begin{align*}
w*1 &= 1*w = w \\
z_{p}w_{1} * z_{q}w_{2} &= z_{p}(w_{1} * z_{q}w_{2}) + z_{q}(z_{p}w_{1} * w_{2}) 
                          + z_{p+q}(w_{1} *w_{2}),
\end{align*}
for all $p,q \geq 1$, and any words $w,w_{1},w_{2} \in \mathfrak{H}^1$, 
together with $\mathbb{Q}$-bilinearity. For instance, 
$z_{p} * z_{q}=z_{p}z_{q} + z_{q}z_{p} + z_{p+q}.$ 
This product corresponds to $\zeta(p)\zeta(q)=\zeta(p,q)+\zeta(q,p)+\zeta(p+q).$

The following theorem which has been proven in [H2] gives the basic algebraic properties 
of the $*$-product. 

\begin{thm}[{[H2]}]
The harmonic product is commutative and associative.
\label{h.p}
\end{thm}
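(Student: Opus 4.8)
The recursion defines $*$ uniquely on $\mathfrak{H}^{1}$ and keeps it inside $\mathfrak{H}^{1}$, by induction on the total number of letters of the two arguments — each term on the right of the recursion is a product of shorter words. I would prove commutativity by the same induction on $N=\ell(u)+\ell(v)$. If $u=1$ or $v=1$ this is the first defining relation; otherwise write $u=z_{p}w_{1}$, $v=z_{q}w_{2}$, so that the recursion gives
\begin{align*}
u*v&=z_{p}(w_{1}*v)+z_{q}(u*w_{2})+z_{p+q}(w_{1}*w_{2}),\\
v*u&=z_{q}(w_{2}*u)+z_{p}(v*w_{1})+z_{q+p}(w_{2}*w_{1}).
\end{align*}
Since $w_{1}*v$, $u*w_{2}$, $w_{1}*w_{2}$ involve strictly fewer than $N$ letters, the inductive hypothesis lets us swap the factors in each, and as $p+q=q+p$ the two right-hand sides agree after reordering the sum; hence $u*v=v*u$.

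Associativity is the substantial point, and my plan is to identify the recursively defined $*$ with the explicit \emph{quasi-shuffle}: for words $u=z_{a_{1}}\cdots z_{a_{m}}$ and $v=z_{b_{1}}\cdots z_{b_{n}}$, let $u*v$ be the sum over all ways to interleave the two letter-sequences in an order-preserving manner, allowing additionally that some letter $z_{a_{i}}$ of $u$ and some letter $z_{b_{j}}$ of $v$ occupy a common position, where they then contribute the single letter $z_{a_{i}}\diamond z_{b_{j}}:=z_{a_{i}+b_{j}}$ instead of two letters; over the empty word the product is the identity. The first step is to check, by induction on $\ell(u)+\ell(v)$, that this product satisfies the defining recursion of $*$, hence equals it: in any such interleaving of $z_{p}w_{1}$ and $z_{q}w_{2}$ the first position is occupied by $z_{p}$ alone, by $z_{q}$ alone, or jointly by $z_{p}$ and $z_{q}$ (contributing $z_{p+q}$), and deleting that position leaves respectively a quasi-shuffle of $w_{1}$ with $z_{q}w_{2}$, of $z_{p}w_{1}$ with $w_{2}$, or of $w_{1}$ with $w_{2}$ — precisely the three terms of the recursion. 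The second step is to observe that on the explicit form commutativity is immediate, and that $(u*v)*w$ and $u*(v*w)$ both unfold to the same three-fold quasi-shuffle sum of $u$, $v$, $w$, in which a position may be occupied by one, two, or three letters (a triple occupancy contributing $z_{a}\diamond z_{b}\diamond z_{c}=z_{a+b+c}$); that this triple sum is well-defined is exactly because $\diamond$, addition of indices, is commutative and associative. The model case $u=z_{a}$, $v=z_{b}$, $w=z_{c}$ makes the pattern concrete: each of the two triple products equals the thirteen-term sum of the six permutations $z_{a}z_{b}z_{c},\dots$, the six one-fusion words $z_{a+b}z_{c}$, $z_{c}z_{a+b},\dots$, and the single word $z_{a+b+c}$.

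The main obstacle is the first step — the careful verification that the explicit quasi-shuffle obeys the recursion — which is a routine but slightly fussy induction on the total number of letters, turning on the first-position analysis just indicated; once the two descriptions are matched, associativity (and commutativity) is inherited directly from the commutativity and associativity of the operation $z_{i}\diamond z_{j}:=z_{i+j}$ on the alphabet $\{z_{k}\}_{k\ge1}$. Alternatively one can bypass the explicit form and prove associativity by a direct induction on $\ell(u)+\ell(v)+\ell(w)$, expanding $(u*v)*w$ and $u*(v*w)$ via the recursion and the inductive hypothesis and matching the resulting thirteen families of terms, with the single-letter computation above as the base case. In either route the content lies in the bookkeeping of terms; conceptually the assertion is simply the instance of Hoffman's quasi-shuffle theorem attached to the commutative semigroup $(\mathbb{Z}_{\ge1},+)$ of indices.
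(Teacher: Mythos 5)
The paper does not actually prove this statement --- it is quoted from Hoffman's paper [H2] with no argument given --- so there is no in-paper proof to compare against; what matters is whether your argument stands on its own, and it does. Your induction for commutativity is exactly right, and both of your routes to associativity are sound. The direct induction on $\ell(u)+\ell(v)+\ell(w)$, expanding $(u*v)*w$ and $u*(v*w)$ by the recursion and matching terms, is essentially the proof in the cited source [H2]; the identification of $*$ with the explicit quasi-shuffle (stuffle) product, followed by the observation that associativity is inherited from associativity of $z_i\diamond z_j:=z_{i+j}$, is the later and more conceptual argument of Hoffman's quasi-shuffle paper. The one place in the quasi-shuffle route where you should not wave your hands is the second step: that $(u*v)*w$, computed as an iterated two-fold quasi-shuffle, coincides with the three-fold quasi-shuffle sum is itself a combinatorial claim (a stuffle of a stuffle must be matched bijectively with a three-fold stuffle, and it is precisely here that associativity of $\diamond$ is used, since a position occupied by letters from all three words can arise in two different iterated ways, as $(z_a\diamond z_b)\diamond z_c$ or $z_a\diamond(z_b\diamond z_c)$). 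You flag this as the locus of the bookkeeping, and your thirteen-term model case $z_a*z_b*z_c$ is computed correctly, so I see no gap --- only the advice that in a written-out version this step deserves its own induction rather than the word ``unfold.''
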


Theorem\,\ref{h.p} says that $\mathfrak{H}^1$ is a $\mathbb{Q}$-commutative algebra 
with respect to the harmonic product $*$. Then $\mathfrak{H}^0$ is subalgebra of $\mathfrak{H}^1$. 
In [H2], it has also been proved that $Z$ is homomorphism with respect to the harmonic product $*$:
\[Z(w_{1} * w_{2}) = Z(w_{1}) Z(w_{2}).\quad  (w_{1},w_{2} \in \mathfrak{H}^0) \]
We conclude this section by introducing the $\mathbb{Q}$-linear map $S$. 
Let $S_{1} \in Aut(\mathfrak{H})$ be defined by $S_{1}(1)=1$, $S_{1}(x)=x$ and $S_{1}(y)=x+y$. 
Define the $\mathbb{Q}$-linear map $S$ : $\mathfrak{H}^{1} \longrightarrow \mathfrak{H}^{1}$ by
\[S(Fy):=S_{1}(F)y\]
for all words $F \in \mathfrak{H}$ and $S(1)=1$.  
Then it is clear that
\[\zeta^*(k_{1},k_{2},\ldots,k_{n}) = Z (S( z_{k_{1}} z_{k_{2}} \cdots z_{k_{n}} ) ).\]
For example, 
$\zeta^*(k_{1},k_{2}) = \zeta(k_{1} + k_{2}) + \zeta(k_{1} , k_{2} ) 
 = Z( S( z_{k_{1}} z_{k_{2}} ) )$. 
$\zeta^*(k_{1},k_{2},k_{3}) = \zeta(k_{1} + k_{2} + k_{3}) + \zeta(k_{1} + k_{2} , k_{3}) 
 + \zeta(k_{1} , k_{2} + k_{3}) + \zeta(k_{1} , k_{2} , k_{3}) 
 = Z( S( z_{k_{1}} z_{k_{2}} z_{k_{3}} ) )$.

\section{Proof of Theorem \ref{thma}}
We prove Theorem \ref{thma} by using the Laurent expansion for the cosecant function:
\[\csc x = \sum_{n=0}^{\infty}(-1)^{n-1} \frac{(2^{2n}-2)B_{2n}}{(2n)!}x^{2n-1}.\]
\begin{proof}[Proof of Theorem \ref{thma}]
Using the infinite product for the sine function, we have
\[\csc \pi x e^{\frac{\pi i} {m} k} 
 =\frac{1} {\displaystyle \pi x e^{\frac{\pi i} {m} k} \prod_{n=1}^{\infty}
 \left(1 - \frac{x^{2}} {n^{2}} e^{\frac{2\pi i} {m} k} \right)} .\]
Substituting $k=0,1,\ldots,m-1$ and multiplying both sides, we obtain
\begin{align}
\left(\pi^m x^{m} \prod_{k=0}^{m-1} e^{\frac{\pi i} {m}k} \right) 
\prod_{k=0}^{m-1}  \csc \pi x e^{\frac{\pi i} {m}k}
&=\frac{1} {\displaystyle \prod_{n=1}^{\infty} \left(1 - \frac{x^{2m}} {n^{2m}} \right)}.
\label{(1)}
\end{align}
The right hand side of \eqref{(1)} equals
\begin{align*}
& 1 + \left(\sum_{n_{1}>0} \frac{1} {n_{1}^{2m}} \right) x^{2m} 
  + \left(\sum_{n_{1} \geq n_{2} > 0} \frac{1} {n_{1}^{2m}n_{2}^{2m}} \right) x^{4m} + \cdots  
  \displaybreak[0]\\
&=  1 + \sum_{n=1}^{\infty} \zeta^{*}(\underbrace{2m,2m,\ldots,2m}_{n}) x^{2mn}.
\end{align*}
On the other hand, the left hand side of \eqref{(1)} equals
\begin{align*}
&\left(\pi^m x^{m} \prod_{k=0}^{m-1} e^{\frac{\pi i} {m}k} \right)
 \prod_{k=0}^{m-1} \sum_{n_{k}=0}^{\infty} (-1)^{n_{k}-1} 
 \frac{(2^{2n_{k}}-2)B_{2n_{k}}} {(2n_{k})!}
 \pi^{2n_{k}-1} x^{2n_{k}-1} e^{\frac{\pi i} {m} k(2n_{k}-1)}  \displaybreak[0]\\
&=\prod_{k=0}^{m-1} \sum_{n_{k}=0}^{\infty} (-1)^{n_{k}-1} 
 \frac{(2^{2n_{k}}-2)B_{2n_{k}}} {(2n_{k})!}
 \pi^{2n_{k}} x^{2n_{k}} e^{\frac{2 \pi i} {m} kn_{k}}  \displaybreak[0]\\
&=1+\sum_{n=1}^{\infty} 
  \left\{ \sum_{\begin{subarray}{c} n_{0} + \cdots + n_{m-1} = mn \\ n_{i} \geq 0 \end{subarray}}
  (-1)^{m(n-1)} \left(\prod _{k=0}^{m-1} 
  \frac{(2^{2n_{k}}-2)B_{2n_{k}}} {(2n_{k})!} \right) \right. \\
&\qquad\qquad\qquad\qquad\qquad\qquad\qquad\qquad\qquad  
 \left.\times \exp 
 \left(\frac{2{\pi}i} {m} \sum_{l=0}^{m-1} ln_{l} \right) \right \} \pi^{2mn} x^{2mn}. 
\end{align*}
Comparing coefficients of both sides, we obtain the desired identity.          
\end{proof}

\begin{cor}
For positive integers $m$,$n$, we have
\[\zeta^{*}(\underbrace{2m,2m,\ldots,2m}_{n}) \in \mathbb{Q} \times \pi^{2mn}.\]
\label{cor5}
\end{cor}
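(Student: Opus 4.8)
The plan is to deduce Corollary \ref{cor5} directly from the closed formula in Theorem \ref{thma}. The point is that the right-hand side of that formula is already written as an explicit expression times $\pi^{2mn}$, so it suffices to show that the quantity inside the braces,
\[
\sum_{\begin{subarray}{c} n_{0}+\cdots+n_{m-1}=mn \\ n_{i}\geq 0 \end{subarray}}
(-1)^{m(n-1)}\left(\prod_{k=0}^{m-1}\frac{(2^{2n_{k}}-2)B_{2n_{k}}}{(2n_{k})!}\right)
\exp\!\left(\frac{2\pi i}{m}\sum_{l=0}^{m-1} l n_{l}\right),
\]
is a rational number. The summand manifestly lies in the cyclotomic field $\mathbb{Q}(\zeta_{m})$, where $\zeta_{m}=e^{2\pi i/m}$, since the Bernoulli numbers $B_{2n}$ are rational and the exponential factor is a power of $\zeta_{m}$. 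So the task reduces to showing that this particular element of $\mathbb{Q}(\zeta_{m})$ is fixed by the full Galois group $\mathrm{Gal}(\mathbb{Q}(\zeta_{m})/\mathbb{Q})$.

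The key step I would carry out is a symmetry argument. For $a$ coprime to $m$, the Galois automorphism $\sigma_{a}$ sends $\zeta_{m}\mapsto\zeta_{m}^{a}$, hence
\[
\sigma_{a}\!\left(\exp\!\left(\frac{2\pi i}{m}\sum_{l=0}^{m-1} l n_{l}\right)\right)
=\exp\!\left(\frac{2\pi i}{m}\sum_{l=0}^{m-1} (al\bmod m)\, n_{l}\right).
\]
Reindexing the sum by the permutation $l\mapsto a^{-1}l\pmod m$ of $\{0,1,\ldots,m-1\}$ — equivalently relabelling the summation variables $n_{l}\mapsto n_{a^{-1}l}$ — leaves the constraint $n_{0}+\cdots+n_{m-1}=mn$ and the product $\prod_{k}(2^{2n_{k}}-2)B_{2n_{k}}/(2n_{k})!$ unchanged (both are symmetric in the $n_{k}$), while it transforms the exponent back into the original one. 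Hence $\sigma_{a}$ fixes the whole sum, and since this holds for every $a$ coprime to $m$, the sum lies in $\mathbb{Q}$. Alternatively, and perhaps more cleanly, one can observe directly from equation \eqref{(1)} that the left-hand side, as a power series in $x$, has rational coefficients times powers of $\pi$: indeed the product $\prod_{k=0}^{m-1}\csc(\pi x\, e^{\pi i k/m})$ times the prefactor equals $\bigl(\prod_{n\geq1}(1-x^{2m}/n^{2m})\bigr)^{-1}$, whose expansion coefficients are the $\zeta^{*}$-values we want and which visibly involves no complex numbers and only even powers of $\pi$ after the substitution is unwound.

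I expect the only real subtlety to be making the reindexing rigorous: one must check that $l\mapsto a^{-1}l\pmod m$ is genuinely a bijection of the index set $\{0,\ldots,m-1\}$ (it is, since $\gcd(a,m)=1$) and that after this relabelling $\sum_{l} l\,n_{a^{-1}l}\equiv a\sum_{l} l\,n_{l}\pmod m$, so that the exponential factor matches. Everything else — rationality of Bernoulli numbers, the fact that symmetric functions of the $n_{k}$ are unaffected by permuting indices — is routine. One should also note the degenerate but harmless cases: when $m=1$ the statement is immediate (no roots of unity appear), and terms with some $n_{k}=0$ contribute the factor $(2^{0}-2)B_{0}/0!=-1$, which is rational and causes no trouble.

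\begin{proof}[Proof of Corollary \ref{cor5}]
By Theorem \ref{thma}, $\zeta^{*}(\underbrace{2m,\ldots,2m}_{n})=A\,\pi^{2mn}$, where
\[
A=\sum_{\begin{subarray}{c} n_{0}+\cdots+n_{m-1}=mn \\ n_{i}\geq 0 \end{subarray}}
(-1)^{m(n-1)}\left(\prod_{k=0}^{m-1}\frac{(2^{2n_{k}}-2)B_{2n_{k}}}{(2n_{k})!}\right)
\exp\!\left(\frac{2\pi i}{m}\sum_{l=0}^{m-1} l n_{l}\right).
\]
Since the Bernoulli numbers are rational, each summand lies in $\mathbb{Q}(\zeta_{m})$ with $\zeta_{m}=e^{2\pi i/m}$, so $A\in\mathbb{Q}(\zeta_{m})$. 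Let $a$ be an integer with $\gcd(a,m)=1$ and let $\sigma_{a}$ be the automorphism of $\mathbb{Q}(\zeta_{m})$ with $\sigma_{a}(\zeta_{m})=\zeta_{m}^{a}$. Applying $\sigma_{a}$ to $A$ and then substituting $n_{l}\mapsto n_{a^{-1}l\bmod m}$ (a bijection of the index set because $a$ is invertible mod $m$), the constraint $\sum n_{l}=mn$ and the symmetric product $\prod_{k}(2^{2n_{k}}-2)B_{2n_{k}}/(2n_{k})!$ are preserved, while
\[
a\sum_{l=0}^{m-1} l\,n_{a^{-1}l}\equiv \sum_{l=0}^{m-1} l\,n_{l} \pmod{m},
\]
so the exponential factor returns to its original value. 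Hence $\sigma_{a}(A)=A$ for every such $a$, and therefore $A\in\mathbb{Q}$. This proves $\zeta^{*}(\underbrace{2m,\ldots,2m}_{n})\in\mathbb{Q}\times\pi^{2mn}$.
\end{proof}
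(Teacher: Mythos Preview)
Your approach is exactly the paper's: show that the coefficient $A$ of $\pi^{2mn}$ in Theorem~\ref{thma} lies in $\mathbb{Q}(\zeta_m)$ and is fixed by every $\sigma_a\in\mathrm{Gal}(\mathbb{Q}(\zeta_m)/\mathbb{Q})$, hence lies in $\mathbb{Q}$. The paper states this in one line; you spell out the permutation argument. The idea is correct.

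There is, however, a slip in the reindexing direction. As written, your displayed congruence
\[
a\sum_{l=0}^{m-1} l\,n_{a^{-1}l}\;\equiv\;\sum_{l=0}^{m-1} l\,n_{l}\pmod m
\]
is false as an identity in the tuple $(n_0,\dots,n_{m-1})$: substituting $j=a^{-1}l$ gives $a\sum_j (aj\bmod m)\,n_j\equiv a^{2}\sum_j j\,n_j\pmod m$, not $\sum_j j\,n_j$. The correct change of variables is $n_l\mapsto n_{al\bmod m}$, for which
\[
a\sum_{l=0}^{m-1} l\,n_{al}\;\equiv\;\sum_{j=0}^{m-1} j\,n_{j}\pmod m
\]
(set $j=al$), and then the exponential factor in $\sigma_a(A)$ indeed returns to that of $A$. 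With this correction the argument goes through verbatim. (Your ``alternative'' remark about reading rationality directly off equation~\eqref{(1)} does not by itself suffice: the right-hand side there only shows the coefficients are the real numbers $\zeta^{*}(2m,\dots,2m)$, so a priori $A\in\mathbb{Q}(\zeta_m)\cap\mathbb{R}$, which is the maximal real subfield and need not be $\mathbb{Q}$; the Galois argument is genuinely needed.)
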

\begin{proof}
The coefficient of $\pi^{2mn}$ on the right hand side of Theorem \ref{thma} 
is invariant under the action of the galois group $Gal(\mathbb{Q}(\zeta_m)/\mathbb{Q})$, 
hence belongs to $\mathbb{Q}$.
\end{proof}

\textbf{Remark.}
Yasuo Ohno proves Theorem \ref{thma} independently. He proves this theorem in two ways, 
one way is to use the same method of our proof. 
The other is to use generating function and differential equation.

\section{Proof of Theorem \ref{thmb}}
Theorem \ref{thmb} will be obtained as a Corollary of a more general identity, 
which is stated as follows.

\begin{thm}
For positive integers $a$, $b$ and nonnegative integer $n$, we have
\begin{align}
 S( (z_{a} z_{b})^{n} ) 
 &= \sum_{i=0}^{n} (z_{a} z_{b})^{i} * S( z_{a+b}^{n-i} ) ,\label{eq:2} \\ 
 S( z_{b}(z_{a} z_{b})^{n} ) 
 &= \sum_{i=0}^{n} z_{b}(z_{a} z_{b})^{i} * S( z_{a+b}^{n-i} ). \label{eq:3}
\end{align}
\label{thm6}
\end{thm}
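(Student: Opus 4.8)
The plan is to prove \eqref{eq:2} and \eqref{eq:3} simultaneously by induction on $n$, the whole argument resting on one elementary recursion for $S$: if $W\in\mathfrak{H}^1$ ends in $y$, say $W=Gy$ with $G\in\mathfrak{H}$, then for every $p\ge 1$
\[
S(z_pW)=S\big((x^{p-1}yG)\,y\big)=S_1(x^{p-1}yG)\,y=x^{p-1}(x+y)\,S_1(G)\,y=(x^p+z_p)\,S(W),
\]
which uses only the definition $S(Fy)=S_1(F)y$ and the multiplicativity of $S_1$; equivalently $S(z_{k_1}\cdots z_{k_n})=S_1(z_{k_1})\cdots S_1(z_{k_{n-1}})\,z_{k_n}$. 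For $n=0$ both identities are trivial, since $S(1)=1$ and $S(z_b)=z_b$.

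For the inductive step write $U_m:=S(z_{a+b}^m)$, so $U_0=1$ and $U_1=z_{a+b}$. Both $z_b(z_az_b)^n$ and $(z_az_b)^{n+1}$ end in $y$, so the recursion above gives
\[
S\big((z_az_b)^{n+1}\big)=(x^a+z_a)\,S\big(z_b(z_az_b)^n\big),\qquad
S\big(z_b(z_az_b)^{n+1}\big)=(x^b+z_b)\,S\big((z_az_b)^{n+1}\big).
\]
Substituting the induction hypothesis \eqref{eq:3} at level $n$, and then \eqref{eq:2} at level $n+1$ once it has been proved, reduces the step to the two $S$-free identities
\[
(x^a+z_a)\sum_{i=0}^{n}z_b(z_az_b)^i*U_{n-i}=\sum_{i=0}^{n+1}(z_az_b)^i*U_{n+1-i},\qquad
(x^b+z_b)\sum_{i=0}^{n+1}(z_az_b)^i*U_{n+1-i}=\sum_{i=0}^{n+1}z_b(z_az_b)^i*U_{n+1-i},
\]
which are statements purely about the harmonic product and the fixed elements $U_m$.

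To establish these I would expand both sides using the recursive definition of $*$ together with the recursion $U_m=\sum_{j=1}^{m}z_{j(a+b)}U_{m-j}$ valid for $m\ge 1$ (obtained by grouping the defining sum of $S(z_{a+b}^m)$ according to the size of its first block). The operator $X\mapsto(x^p+z_p)X=x^{p-1}(x+y)X$ affects only the beginning of each monomial, while the $*$-recursion strips off leading letters and the $U$-recursion splits off a leading $z_{j(a+b)}$; the content is that, after summing over $i$ and over the blocks produced by the $U$-recursion, the non-leading contributions telescope and exactly the prescribed sum on the other side survives. This telescoping is the main obstacle: $X\mapsto(x^p+z_p)X$ is neither a $*$-homomorphism nor commutes with $*$, so the cancellations must be extracted from the two recursions by hand, most cleanly through an auxiliary identity for $(x^p+z_p)\big(W*z_qW'\big)$ that isolates the leading term $z_{p+q}(W*W')$ and matches the remaining terms with the next index of the sum. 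Once the two commutation identities are proved, the induction closes and Theorem~\ref{thm6} follows.
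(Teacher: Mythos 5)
Your setup is sound and is in essence the same as the paper's: your recursion $S(z_pW)=(x^p+z_p)S(W)$ for $W$ ending in $y$ is equivalent to the identity the paper derives from $S(w_1w_2)=S_1(w_1)S(w_2)$, namely \eqref{eq:4} and its specializations \eqref{eq:5}--\eqref{eq:7}, and your simultaneous induction (prove \eqref{eq:2} at level $n+1$ from \eqref{eq:3} at level $n$, then \eqref{eq:3} at level $n+1$ from \eqref{eq:2} at level $n+1$) is exactly the paper's scheme. The reduction of the inductive step to the two $S$-free identities involving $U_m=S(z_{a+b}^m)$ is legitimate, and those identities are true.

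However, the argument is not complete: those two harmonic-product identities are precisely where all the content of the theorem sits, and you do not prove them --- you describe a plan (``expand both sides \dots the non-leading contributions telescope'') and yourself call the telescoping ``the main obstacle.'' The operator $X\mapsto(x^p+z_p)X$ interacts with $*$ in a genuinely nontrivial way: on a word beginning with $z_r$ one has $(x^p+z_p)z_rv=z_p*z_rv-z_r(z_p*v)$, so the non-leading terms do not cancel for free, and controlling them requires exactly the kind of double-sum bookkeeping that constitutes the bulk of the paper's proof (expand each $S(z_{a+b}^{m})$ by \eqref{eq:7}, apply the $*$-recursion to strip leading letters, reindex the resulting double sums, invoke the induction hypothesis on the inner sums, and reassemble via \eqref{eq:5} and \eqref{eq:6}). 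Your proposed auxiliary identity for $(x^p+z_p)\bigl(W*z_qW'\bigr)$ is a plausible route, but until it is stated, proved, and the resulting sums are actually shown to collapse to $\sum_{i=0}^{n+1}(z_az_b)^i*U_{n+1-i}$ and $\sum_{i=0}^{n+1}z_b(z_az_b)^i*U_{n+1-i}$, what you have is a correct reduction plus an unproven claim: the core computation of the proof is missing.
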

\begin{proof}
By definition of $S$, we have 
\[S(w_{1} w_{2}) = S_{1}(w_{1}) S(w_{2}). 
 \quad  (w_{1} \in \mathfrak{H}, w_{2} \in \mathfrak{H}^1) \]
Using this identity, we obtain 
\begin{align}
\lefteqn{ S( z_{k_{1}} z_{k_{2}} \cdots z_{k_{n}} ) } \nonumber\\
 &= z_{k_{1}} S( z_{k_{2}} z_{k_{3}} \cdots z_{k_{n}} ) 
  + S( z_{k_{1}+k_{2}} z_{k_{3}} \cdots z_{k_{n}} ) \nonumber\\
 &= z_{k_{1}} S( z_{k_{2}} z_{k_{3}} \cdots z_{k_{n}} ) 
  + z_{k_{1}+k_{2}} S( z_{k_{3}} \cdots z_{k_{n}} ) 
  + S( z_{k_{1}+k_{2}+k_{3}} z_{k_{4}} \cdots z_{k_{n}} ) \nonumber\\
 &= \cdots \nonumber\\
 &= \sum_{j=1}^{n} 
    z_{ k_{1} + k_{2} + \cdots + k_{j} } 
    S( z_{k_{j+1}} z_{k_{j+2}} \cdots z_{k_{n}}). \label{eq:4}
\end{align}
(When $j=n$, we regard $S( z_{k_{j+1}} z_{k_{j+2}} \cdots z_{k_{n}})$ as $1$.)
By using this identity, we obtain
\begin{align}
 S( (z_{a} z_{b})^{n} ) 
 &= \sum_{j=0}^{n-1} z_{(a+b)j+a} S( z_{b} (z_{a} z_{b})^{n-1-j} ) 
 +\sum_{j=1}^{n} z_{(a+b)j} S( (z_{a} z_{b})^{n-j} ), \label{eq:5} \displaybreak[0] \\
 S( z_{b}(z_{a} z_{b})^{n} ) 
 &= \sum_{j=0}^{n} z_{(a+b)j+b} S( (z_{a} z_{b})^{n-j} ) 
 +\sum_{j=1}^{n} z_{(a+b)j} S( z_{b} (z_{a} z_{b})^{n-j} ), \label{eq:6} \displaybreak[0] \\
 S( z_{a+b}^{n} ) &= \sum_{j=1}^{n} z_{(a+b)j} S( z_{a+b}^{n-j} ). \label{eq:7}
\end{align}
We prove identities (\ref{eq:2}) and (\ref{eq:3}) simultaneously by induction. 
The case of $n=0$ is obvious. Suppose that the assertion has been proven up to $n-1$. 
\begin{align*}
\lefteqn{\mathrm{RHS \; of \;} (\ref{eq:2})} \\
&\stackrel{(\ref{eq:7})}{=} 
   S( z_{a+b}^{n} ) 
   + \sum_{i=1}^{n-1} (z_{a} z_{b})^{i} * \sum_{j=1}^{n-i}z_{(a+b)j} S( z_{a+b}^{n-i-j} ) 
   + (z_{a} z_{b})^{n} \displaybreak[0]\\
&= S( z_{a+b}^{n} ) + \sum_{i=1}^{n-1} \sum_{j=1}^{n-i} z_{a} 
 \left( z_{b} (z_{a} z_{b})^{i-1} * z_{(a+b)j} S( z_{a+b}^{n-i-j} ) \right) \\
& \quad + \sum_{i=1}^{n-1} \sum_{j=1}^{n-i} z_{(a+b)j} 
 \left( (z_{a} z_{b})^{i} * S( z_{a+b}^{n-i-j} ) \right) \\
& \quad + \sum_{i=1}^{n-1} \sum_{j=1}^{n-i} z_{(a+b)j+a} 
   \left( z_{b} (z_{a} z_{b})^{i-1} * S( z_{a+b}^{n-i-j} ) \right) + (z_{a} z_{b})^{n} 
   \displaybreak[0]\\
&\stackrel{(\ref{eq:7})}{=}
  S( z_{a+b}^{n} ) + z_{a} \sum_{i=1}^{n-1} z_{b} (z_{a} z_{b})^{i-1} * S( z_{a+b}^{n-i} )  \\
& \quad + \sum_{j=1}^{n-1} z_{(a+b)j} \sum_{i=1}^{n-j} (z_{a} z_{b})^{i} * S( z_{a+b}^{n-j-i} ) \\
& \quad + \sum_{j=1}^{n-1} z_{(a+b)j+a} \sum_{i=1}^{n-j} 
  z_{b} (z_{a} z_{b})^{i-1} * S( z_{a+b}^{n-j-i} ) + (z_{a} z_{b})^{n} \displaybreak[0]\\
&= S( z_{a+b}^{n} ) + z_{a} 
 \left\{ S( z_{b} (z_{a} z_{b})^{n-1} ) - z_{b} (z_{a} z_{b})^{n-1} \right\} \\
& \quad + \sum_{j=1}^{n-1} z_{(a+b)j} 
 \left\{ S( (z_{a} z_{b} )^{n-j} ) - S( z_{a+b}^{n-j} ) \right\} \\
& \quad + \sum_{j=1}^{n-1} z_{(a+b)j+a} S(z_{b} (z_{a} z_{b})^{n-j-1} ) + (z_{a} z_{b})^{n} \\
& \qquad \mathrm{\big( by \; induction \; hypothesis \big) } \displaybreak[0]\\
&\stackrel{(\ref{eq:7})}{=}
  S( z_{a+b}^{n} ) + z_{a} S( z_{b} (z_{a} z_{b})^{n-1} ) 
  + \sum_{j=1}^{n-1} z_{(a+b)j} S( (z_{a} z_{b} )^{n-j} ) \\
& \quad - \left\{ S( z_{a+b}^{n} ) - z_{(a+b)n} \right\} 
  + \sum_{j=1}^{n-1} z_{(a+b)j+a} S(z_{b} (z_{a} z_{b})^{n-j-1} ) \displaybreak[0]\\
&= \sum_{j=0}^{n-1} z_{(a+b)j+a} S(z_{b} (z_{a} z_{b})^{n-j-1} ) 
  + \sum_{j=1}^{n} z_{(a+b)j} S( (z_{a} z_{b} )^{n-j} ) \displaybreak[0]\\
 &\stackrel{(\ref{eq:5})}{=} S( (z_{a} z_{b})^{n} ). 
 \end{align*} 
Hence, (\ref{eq:2}) is true for $n$. 
\begin{align*}
\lefteqn{\mathrm{RHS \; of \;} (\ref{eq:3})} \\
&\stackrel{(\ref{eq:7})}{=} 
 \sum_{i=0}^{n-1} z_{b}(z_{a} z_{b})^{i} * \sum_{j=1}^{n-i} z_{(a+b)j} S( z_{a+b}^{n-i-j} ) 
 + z_{b} (z_{a} z_{b})^{n} \displaybreak[0]\\
&= \sum_{i=0}^{n-1} \sum_{j=1}^{n-i} z_{b} 
 \left( (z_{a} z_{b})^{i} * z_{(a+b)j} S( z_{a+b}^{n-i-j}) \right) \\
&\quad + \sum_{i=0}^{n-1} \sum_{j=1}^{n-i} z_{(a+b)j} 
 \left( z_{b} (z_{a} z_{b})^{i} * S( z_{a+b}^{n-i-j})  \right) \\
& \quad + \sum_{i=0}^{n-1} \sum_{j=1}^{n-i} z_{(a+b)j+b} 
 \left( (z_{a} z_{b})^{i} * S( z_{a+b}^{n-i-j})  \right) 
 + z_{b} (z_{a} z_{b})^{n} \displaybreak[0]\\
&\stackrel{(\ref{eq:7})}{=} 
 z_{b} \sum_{i=0}^{n-1} (z_{a} z_{b})^{i} * S( z_{a+b}^{n-i})  \\
& \quad + \sum_{j=1}^{n} z_{(a+b)j} \sum_{i=0}^{n-j} 
 z_{b} (z_{a} z_{b})^{i} * S( z_{a+b}^{n-j-i}) \\
& \quad + \sum_{j=1}^{n} z_{(a+b)j+b} \sum_{i=0}^{n-j} 
 (z_{a} z_{b})^{i} * S( z_{a+b}^{n-j-i}) + z_{b} (z_{a} z_{b})^{n} \displaybreak[0]\\
&= z_{b} \left\{ S( (z_{a} z_{b})^{n} ) - (z_{a} z_{b})^{n}\right\} 
 + \sum_{j=1}^{n} z_{(a+b)j} S( z_{b} (z_{a} z_{b})^{n-j} ) \\
& \quad + \sum_{j=1}^{n} z_{(a+b)j+b} S( (z_{a} z_{b})^{n-j} ) + z_{b} (z_{a} z_{b})^{n} \\
& \qquad \mathrm{\big( by \; induction \; hypothesis \; and \; (\ref{eq:2}) \; for} \; n \big) 
  \displaybreak[0]\\
&= \sum_{j=0}^{n} z_{(a+b)j+b} S( (z_{a} z_{b})^{n-j} ) 
  + \sum_{j=1}^{n} z_{(a+b)j} S( z_{b} (z_{a} z_{b})^{n-j} )  \displaybreak[0]\\
&\stackrel{(\ref{eq:6})}{=} S( z_{b} (z_{a} z_{b})^{n} ). 
\end{align*}
Therefore, (\ref{eq:2}) is true for $n$.
\end{proof}

\begin{proof}[Proof of Theorem \ref{thmb}]
From (\ref{eq:2}), we have 
\[ \zeta^{*}(\underbrace{3,1,\cdots,3,1}_{2n}) 
 =\sum_{i=0}^{n} \zeta( \underbrace{3,1,\cdots,3,1}_{2i} ) 
  \zeta^{*}( \underbrace{4,4,\cdots,4}_{n-i} ).\]
Hence, we have the assertion by Theorem \ref{thm2} and Theorem \ref{thma}.
\end{proof}

\section{Proof of Theorem \ref{thmc}}
As in Section 4, we prove the following identities to obtain the explicit evaluations of 
$\sum_{\vec{s}_{2n} \in I_{2n}} \zeta^* ( \vec{s}_{2n} )$.

\begin{thm}
For positive integers $a$, $b$, $c$ and nonnegative integer $n$, we have
\begin{align}
\lefteqn{ 
 \sum_{k=0}^{n} S( (z_{a} z_{b})^{k} z_{c} (z_{a} z_{b})^{n-k} ) 
 + \sum_{k=0}^{n-1} S( z_{a} (z_{b} z_{a})^{k} z_{c} (z_{b} z_{a})^{n-1-k} z_{b} ) } \nonumber \\
&= 2 \sum_{k=0}^{n} z_{(a+b)k+c} * S( (z_{a} z_{b})^{n-k} ) \label{eq:8}\\
& \quad  - \sum_{k=0}^{n} S( z_{a+b}^{n-k} ) * 
 \left\{ \sum_{i=0}^{k} (z_{a} z_{b})^{i} z_{c} (z_{a} z_{b})^{k-i} 
 + \sum_{i=0}^{k-1} z_{a} (z_{b} z_{a})^{i} z_{c} (z_{b} z_{a})^{k-1-i} z_{b} \right\} \nonumber
\end{align}
and
\begin{align}
\lefteqn{ \sum_{k=0}^{n} S( z_{b} (z_{a} z_{b})^{k} z_{c} (z_{a} z_{b})^{n-k} ) 
          + \sum_{k=0}^{n} S( (z_{b} z_{a})^{k} z_{c} (z_{b} z_{a})^{n-k} z_{b} ) } \nonumber \\
&= 2 \sum_{k=0}^{n} z_{(a+b)k+c} * S( z_{b} (z_{a} z_{b})^{n-k} ) \label{eq:9}\\
&\quad  - \sum_{k=0}^{n} S( z_{a+b}^{n-k} ) * 
 \left\{ \sum_{i=0}^{k} z_{b} (z_{a} z_{b})^{i} z_{c} (z_{a} z_{b})^{k-i} 
 + \sum_{i=0}^{k} (z_{b} z_{a})^{i} z_{c} (z_{b} z_{a})^{k-i} z_{b} \right\}. \nonumber
\end{align}
$($We regard summations $\sum_{i=m}^{m-1}\cdots$ as $0$.$)$
\label{thm7}
\end{thm}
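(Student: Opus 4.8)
The plan is to prove \eqref{eq:8} and \eqref{eq:9} simultaneously by induction on $n$, imitating the proof of Theorem \ref{thm6}: the inductive step will be a long but essentially mechanical computation that rewrites each side by repeatedly applying the decomposition \eqref{eq:4}, the defining recursion $z_pw_1 * z_qw_2 = z_p(w_1 * z_qw_2) + z_q(z_pw_1 * w_2) + z_{p+q}(w_1 * w_2)$ of the harmonic product, the auxiliary identities \eqref{eq:5}--\eqref{eq:7}, the identities \eqref{eq:2}--\eqref{eq:3} of Theorem \ref{thm6} (now available for every $n$), and the induction hypothesis.

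First I would record the analogues of \eqref{eq:5}--\eqref{eq:6} for the four families of words occurring in \eqref{eq:8}--\eqref{eq:9}, namely the words obtained by inserting a single $z_c$ into $(z_az_b)^n$, into $z_a(z_bz_a)^{n-1}z_b$, into $z_b(z_az_b)^n$, and into $(z_bz_a)^nz_b$. Applying \eqref{eq:4} to such a word peels a letter $z_\bullet$ off the front and leaves a word of the same shape but shorter; as soon as the peeled prefix has absorbed the inserted $z_c$, the remainder is a plain $(z_az_b)^m$ or $z_b(z_az_b)^m$, which contributes the terms $z_\bullet S((z_az_b)^m)$ and $z_\bullet S(z_b(z_az_b)^m)$. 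Summing over the position of $z_c$ and collecting terms by their leading letter then yields recursions that express the left-hand side of \eqref{eq:8} (resp.\ \eqref{eq:9}) as a sum of $z_\bullet$ times shorter instances of the left-hand sides of \eqref{eq:8} and \eqref{eq:9}, plus $z_\bullet$ times $S$ of the standard words $(z_az_b)^m$ and $z_b(z_az_b)^m$; these recursions will play the role that \eqref{eq:5} and \eqref{eq:6} played for Theorem \ref{thm6}.

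With these in hand, the inductive step on \eqref{eq:8} starts from its right-hand side: isolate the boundary terms ($k=0$ and $k=n$) on which \eqref{eq:5} and \eqref{eq:7} are not applicable, use \eqref{eq:5} and \eqref{eq:7} to expand the remaining inner factors $S((z_az_b)^m)$ and $S(z_{a+b}^m)$ ($m\geq 1$), and then expand every resulting harmonic product by the defining recursion. Regrouping the nested sums by the weight of the leading letter $z_\bullet$, each group is then identified---by the induction hypothesis for \eqref{eq:8} and \eqref{eq:9} at level $n-1$, together with \eqref{eq:2}, \eqref{eq:3} and \eqref{eq:7}---as $z_\bullet$ times one of the shorter left-hand sums from the previous paragraph, and reassembling via the recursions of that paragraph gives precisely the left-hand side of \eqref{eq:8}. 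The argument for \eqref{eq:9} is entirely parallel and is allowed to use \eqref{eq:8} at level $n$ as well, just as the proof of \eqref{eq:3} used \eqref{eq:2} at level $n$. The base case $n=0$ is immediate: \eqref{eq:8} reduces to $z_c = 2z_c - z_c$, and \eqref{eq:9} reduces to $S(z_bz_c) + S(z_cz_b) = 2(z_c * z_b) - (z_bz_c + z_cz_b)$, which holds because $S(z_bz_c) = z_{b+c} + z_bz_c$, $S(z_cz_b) = z_{b+c} + z_cz_b$, and $z_c * z_b = z_cz_b + z_bz_c + z_{b+c}$.

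The only real difficulty is bookkeeping. After the harmonic products are expanded, both sides of \eqref{eq:8}--\eqref{eq:9} are triple sums, and the massive cancellation that makes them collapse must be organized with care; as in the proof of Theorem \ref{thm6}, the device that keeps this manageable is to perform every regrouping according to the weight of the leading letter, so that checking equality of the two sides is reduced to a short list of single-letter-prefixed identities, each of which is furnished by the induction hypothesis or by one of the already established identities \eqref{eq:2}, \eqref{eq:3}, \eqref{eq:5}--\eqref{eq:7}.
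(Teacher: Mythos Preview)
Your proposal is correct and follows essentially the same approach as the paper: the paper likewise proves \eqref{eq:8} and \eqref{eq:9} simultaneously by induction on $n$, first deriving from \eqref{eq:4} the recursions (your ``analogues of \eqref{eq:5}--\eqref{eq:6}'') that express the left-hand sides in terms of shorter instances, and then expanding the right-hand sides via \eqref{eq:5}--\eqref{eq:7} and the harmonic-product recursion, regrouping by the leading letter, and applying the induction hypothesis together with \eqref{eq:2}--\eqref{eq:3} (with \eqref{eq:9} at level $n$ using \eqref{eq:8} at level $n$, just as you anticipate). Your verification of the base case $n=0$ is also correct.
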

\begin{proof}
We put
\[A_{i,j} = (z_{a} z_{b})^i z_{c} (z_{a} z_{b})^j 
 \quad \mathrm{and} \quad
 B_{i,j} = (z_{b} z_{a})^{i} z_{c} (z_{b} z_{a})^j z_{b}.\]
Then we can rewrite (\ref{eq:8}) and (\ref{eq:9}) as follows:
\begin{align}
 \lefteqn{ 
 \sum_{k=0}^{n} S( A_{k,n-k} ) + \sum_{k=0}^{n-1} S( z_{a} B_{k,n-1-k} ) 
 } 
 \nonumber \\
&= 2 \sum_{k=0}^{n} z_{(a+b)k+c} * S( (z_{a} z_{b})^{n-k} ) \nonumber \\
& \qquad\qquad -
  \sum_{k=0}^{n} S( z_{a+b}^{n-k} ) * 
  \left\{ 
  \sum_{i=0}^{k} A_{i,k-i} + \sum_{i=0}^{k-1} z_{a} B_{i,k-1-i} 
  \right\} \label{eq:10}
\end{align}
and
\begin{align}
 \lefteqn{ 
 \sum_{k=0}^{n} S( z_{b} A_{k,n-k} ) + \sum_{k=0}^{n} S( B_{k,n-k} ) 
 } 
 \nonumber \\
&= 2 \sum_{k=0}^{n} z_{(a+b)k+c} * S( z_{b} (z_{a} z_{b})^{n-k} ) \nonumber\\
& \qquad\qquad - 
 \sum_{k=0}^{n} S( z_{a+b}^{n-k} ) * 
 \left\{ 
 \sum_{i=0}^{k} z_{b} A_{i,k-i} + \sum_{i=0}^{k} B_{i,k-i} 
 \right\}. \label{eq:11}
\end{align}
We prove the identities (\ref{eq:10}) and (\ref{eq:11}) simultaneously by induction. 
Before proceeding the proof, by using equation (\ref{eq:4}), 
we rewrite the quantities on the LHSs of 
(\ref{eq:10}) and (\ref{eq:11}). 
\begin{align*}
\lefteqn{S( A_{k,n-k} )} \\
 &=\sum_{j=1}^{k} 
   \left\{ 
   z_{(a+b)(j-1)+a} S( z_{b} A_{k-j,n-k} ) + z_{(a+b)j} S( A_{k-j,n-k} ) 
   \right\} \\
 & \quad + z_{(a+b)k+c} S( (z_{a} z_{b})^{n-k} ) \\
 & \quad + \sum_{j=k+1}^{n} 
  \left\{ 
  z_{(a+b)(j-1)+a+c} S( z_{b} (z_{a} z_{b})^{n-j} )
  + z_{(a+b)j+c} S( (z_{a} z_{b})^{n-j} ) 
  \right\}
\end{align*}
for $0 \leq k \leq n$, 
\begin{align*}
\lefteqn{
S( z_{a} B_{k,n-1-k} ) 
} \\
 &=\sum_{j=1}^{k} 
   \left\{ 
   z_{(a+b)(j-1)+a} S( B_{k-j+1,n-1-k} ) + z_{(a+b)j} S( z_{a} B_{k-j,n-1-k} ) 
   \right\} \\
 & \quad + z_{(a+b)k+a} S( B_{0,n-1-k}) \\
 & \quad + \sum_{j=k+1}^{n} 
  \left\{ 
  z_{(a+b)(j-1)+a+c} S( (z_{b} z_{a})^{n-j} z_{b}) 
   +z_{(a+b)j+c} S( (z_{a} z_{b})^{n-j}) 
   \right\}
\end{align*}
for $0 \leq k \leq n-1$,
\begin{align*}
\lefteqn{
S( z_{b} A_{k,n-k} )
} \\
 &=\sum_{j=1}^{k} 
   \left\{ 
   z_{(a+b)(j-1)+b} S( A_{k-j+1,n-k} ) + z_{(a+b)j} S( z_{b} A_{k-j,n-k} ) 
   \right\} \\
 & \quad + z_{(a+b)k+b} S (A_{0,n-k} )  \\
 & \quad + \sum_{j=k+1}^{n} 
 \left\{ 
 z_{(a+b)(j-1)+b+c} S( (z_{a} z_{b})^{n-j+1} ) + z_{(a+b)j+c} S( z_{b} (z_{a} z_{b})^{n-j} ) 
 \right\} \\
 & \quad + z_{(a+b)n+b+c}
\end{align*}
for $0 \leq k \leq n$ and
\begin{align*}
\lefteqn{S( B_{k,n-k} )} \\
 &=\sum_{j=1}^{k} 
   \left\{ 
   z_{(a+b)(j-1)+b} S( z_{a} B_{k-j,n-k} ) + z_{(a+b)j} S( B_{k-j,n-k} )  
   \right\} \\
 & \quad + z_{(a+b)k+c} S ( (z_{b} z_{a})^{n-k} z_{b} )  
   \displaybreak[0]\\
 & \quad + 
 \sum_{j=k+1}^{n} 
 \left\{ 
 z_{(a+b)(j-1)+b+c} S( (z_{a} z_{b})^{n-j+1} ) 
 + z_{(a+b)j+c} S( (z_{b} z_{a})^{n-j} z_{b}) 
 \right\} 
 \displaybreak[0]\\
 & \quad 
 + z_{(a+b)n+b+c}
\end{align*}
for $0 \leq k \leq n$. Hence, we have 
\begin{align}
 \lefteqn{
 \sum_{k=0}^{n} S( A_{k,n-k} ) + \sum_{k=0}^{n-1} S( z_{a} B_{k,n-1-k} ) 
 } 
 \nonumber \\
&= \sum_{k=0}^{n} z_{(a+b)k+c} S( (z_{a} z_{b})^{n-k} ) \nonumber \\
&\quad 
 + 2 \sum_{k=0}^{n-1} \sum_{j=k+1}^{n} 
 \left\{ 
 z_{(a+b)(j-1)+a+c} S( z_{b} (z_{a} z_{b})^{n-j} ) 
 + z_{(a+b)j+c} S( (z_{a} z_{b})^{n-j} ) 
 \right\} 
 \nonumber \displaybreak[0]\\
&\quad + 
  \sum_{j=1}^{n} z_{(a+b)j} 
  \left\{ 
  \sum_{k=0}^{n-j} S( A_{k,n-j-k} ) + \sum_{k=0}^{n-j-1} S(z_{a} B_{k,n-j-1-k} ) 
  \right\} 
  \nonumber\\
&\quad + 
  \sum_{j=1}^{n} z_{(a+b)(j-1)+a} 
  \left\{ 
  \sum_{k=0}^{n-j} S( z_{b} A_{k,n-j-k} ) + \sum_{k=0}^{n-j} S( B_{k,n-j-k} ) 
  \right\} \label{eq:12}
\end{align}
and
\begin{align}
 \lefteqn{ 
 \sum_{k=0}^{n} S( z_{b} A_{k,n-k} ) + \sum_{k=0}^{n} S( B_{k,n-k} ) 
 } 
 \nonumber \\
&= \sum_{k=0}^{n} z_{(a+b)k+c} S( z_{b} (z_{a} z_{b})^{n-k}) 
 \nonumber \displaybreak[0]\\
&\quad 
 + 2\sum_{k=0}^{n-1} \sum_{j=0}^{n-k} z_{(a+b)(k+j)+b+c} S( (z_{a} z_{b})^{n-j-k} ) 
 \nonumber \displaybreak[0]\\
&\quad 
 + 2\sum_{k=0}^{n-1} \sum_{j=1}^{n-k} 
 z_{(a+b)(k+j)+c} S( z_{b} (z_{a} z_{b})^{n-j-k} ) + 2z_{(a+b)n+b+c} 
 \nonumber \displaybreak[0]\\
&\quad + 
 \sum_{j=1}^{n} z_{(a+b)j} 
 \left\{ 
 \sum_{k=0}^{n-j} S( z_{b} A_{k,n-j-k} ) + \sum_{k=0}^{n-j} S( B_{k,n-j-k} ) 
 \right\} \nonumber \\
&\quad + 
 \sum_{j=0}^{n} z_{(a+b)j+b} 
 \left\{ 
 \sum_{k=0}^{n-j} S( A_{k,n-j-k} ) + \sum_{k=0}^{n-j-1} S( z_{a} B_{k,n-j-1-k} ) 
 \right\}. \label{eq:13}
\end{align} 

Now, the case of $n=0$ is obvious. Suppose that the assertion has been proven up to $n-1$.
\begin{align*}
\lefteqn{\sum_{k=0}^{n} z_{(a+b)k+c} * S( (z_{a} z_{b})^{n-k} )} \\
 &\stackrel{(\ref{eq:5})}{=} 
  \sum_{k=0}^{n-1} z_{(a+b)k+c} 
  * \left\{ \sum_{j=1}^{n-k} 
  z_{(a+b)(j-1)+a} S( z_{b} (z_{a} z_{b})^{n-k-j}) \right. \displaybreak[0]\\
 & \left. \qquad \qquad \qquad \qquad \qquad 
  + \sum_{j=1}^{n-k} 
  z_{(a+b)j} S( (z_{a} z_{b})^{n-k-j}) \right\} + z_{(a+b)n+c} \displaybreak[0]\\
 &= \sum_{k=0}^{n-1} \sum_{j=1}^{n-k} 
  z_{(a+b)k+c} z_{(a+b)(j-1)+a} S( z_{b} (z_{a} z_{b})^{n-k-j}) \displaybreak[0]\\
 &\quad + \sum_{k=0}^{n-1} \sum_{j=1}^{n-k} 
  z_{(a+b)(j-1)+a} \left( z_{(a+b)k+c} * S( z_{b} (z_{a} z_{b})^{n-k-j} ) \right) 
  \displaybreak[0]\\
 &\quad + \sum_{k=0}^{n-1} \sum_{j=1}^{n-k} 
  z_{(a+b)(k+j-1)+a+c} S( z_{b} (z_{a} z_{b})^{n-k-j} ) \displaybreak[0]\\
 &\quad + \sum_{k=0}^{n-1} \sum_{j=1}^{n-k} 
  z_{(a+b)k+c} z_{(a+b)j} S( (z_{a} z_{b})^{n-k-j} ) \displaybreak[0]\\
 &\quad + \sum_{k=0}^{n-1} \sum_{j=1}^{n-k} 
  z_{(a+b)j} \left( z_{(a+b)k+c} * S( (z_{a} z_{b})^{n-k-j} ) \right) \displaybreak[0]\\
 &\quad + \sum_{k=0}^{n-1} \sum_{j=1}^{n-k} 
  z_{(a+b)(k+j)+c} S( (z_{a} z_{b})^{n-k-j} ) + z_{(a+b)n+c} \displaybreak[0]\\
 &\stackrel{(\ref{eq:5})}{=}
  \sum_{k=0}^{n} z_{(a+b)k+c} S( (z_{a} z_{b})^{n-k} ) \displaybreak[0]\\
 &\quad + \sum_{j=1}^{n} z_{(a+b)(j-1)+a} 
  \sum_{k=0}^{n-j} z_{(a+b)k+c} * S( z_{b} (z_{a} z_{b})^{n-k-j} ) \displaybreak[0]\\
 &\quad + \sum_{j=1}^{n} z_{(a+b)j} 
  \sum_{k=0}^{n-j} z_{(a+b)k+c} * S( (z_{a} z_{b})^{n-k-j} ) \displaybreak[0]\\
 &\quad + \sum_{k=0}^{n-1} \sum_{j=1}^{n-k} 
  \left\{ z_{(a+b)(k+j-1)+a+c} S( z_{b} (z_{a} z_{b})^{n-k-j}) 
        + z_{(a+b)(k+j)+c} S( (z_{a} z_{b})^{n-k-j}) \right\}.
\end{align*}
On the other hand, 
\begin{align*}
 \lefteqn{ 
 \sum_{k=0}^{n} S( z_{a+b}^{n-k} ) * 
 \left\{ \sum_{i=0}^{k} A_{i,k-i} + \sum_{i=0}^{k-1} z_{a} B_{i,k-1-i} \right\} 
 } \\
 &= S(z_{a+b}^{n}) * z_{c} \displaybreak[0] \\
 &\quad + \sum_{k=1}^{n-1} S( z_{a+b}^{n-k} ) * 
  \left\{ \sum_{i=1}^{k} A_{i,k-i} + \sum_{i=0}^{k-1} z_{a} B_{i,k-1-i} \right\}  
  + \sum_{k=1}^{n-1} S(z_{a+b}^{n-k}) * z_{c} (z_{a} z_{b})^{k} \\
 &\quad + \sum_{i=0}^{n} A_{i,n-i} + \sum_{i=0}^{n-1} z_{a} B_{i,n-1-i} \displaybreak[0] \\
 &\stackrel{(\ref{eq:7})}{=} 
  \sum_{k=1}^{n-1} \sum_{j=1}^{n-k} z_{(a+b)j} S( z_{a+b}^{n-k-j}) * 
  \left\{ \sum_{i=1}^{k} A_{i,k-i}+ \sum_{i=0}^{k-1} z_{a} B_{i,k-1-i} \right\} 
  \displaybreak[0] \\
 &\quad + \sum_{k=0}^{n-1} \sum_{j=1}^{n-k} 
  z_{(a+b)j} S( z_{a+b}^{n-k-j} ) * z_{c}  (z_{a} z_{b})^{k} \displaybreak[0] \\
 &\quad + \sum_{i=0}^{n} A_{i,n-i} + \sum_{i=0}^{n-1} z_{a} B_{i,n-1-i} \displaybreak[0]  \\
 &=\sum_{k=1}^{n-1} \sum_{j=1}^{n-k} z_{(a+b)j} 
  \left\{ 
  S( z_{a+b}^{n-k-j} ) * 
  \left( \sum_{i=1}^{k} A_{i,k-i} + \sum_{i=0}^{k-1} z_{a} B_{i,k-1-i} \right) 
  \right\} \displaybreak[0] \\
 &\quad+ \sum_{k=1}^{n-1} \sum_{j=1}^{n-k} z_{a} 
  \left\{ 
  z_{(a+b)j} S( z_{a+b}^{n-k-j}) * 
  \left( \sum_{i=1}^{k} z_{b} A_{i-1,k-i} + \sum_{i=0}^{k-1} B_{i,k-1-i} \right) 
  \right\} \displaybreak[0] \\
 &\quad+ \sum_{k=1}^{n-1} \sum_{j=1}^{n-k} z_{(a+b)j+a} 
  \left\{ 
  S( z_{a+b}^{n-k-j}) * 
  \left( \sum_{i=1}^{k} z_{b} A_{i-1,k-i} + \sum_{i=0}^{k-1} B_{i,k-1-i} \right) 
  \right\} \displaybreak[0] \\
 &\quad +\sum_{k=0}^{n-1} \sum_{j=1}^{n-k} 
  \left\{ 
  z_{(a+b)j} \left( S( z_{a+b}^{n-k-j} ) * z_{c}  (z_{a} z_{b})^{k} \right)   \right. \\
 & \left.\quad + z_{c} \left( z_{(a+b)j} S( z_{a+b}^{n-k-j} ) * (z_{a} z_{b})^{k} \right)
  + z_{(a+b)j+c} \left( S( z_{a+b}^{n-k-j} ) * (z_{a} z_{b})^{k} \right) 
  \right\} \\
 &\quad+ \sum_{i=0}^{n} A_{i,n-i} + \sum_{i=0}^{n-1} z_{a} B_{i,n-1-i} \displaybreak[0]\\
 &=\sum_{k=1}^{n-1} \sum_{j=1}^{n-k} z_{(a+b)j} 
  \left\{
  S( z_{a+b}^{n-k-j}) *  
  \left( \sum_{i=1}^{k} A_{i,k-i} + \sum_{i=0}^{k-1} z_{a} B_{i,k-1-i} \right) 
  \right\} \\
 &\quad+
  \sum_{k=0}^{n-1} \sum_{j=1}^{n-k} z_{(a+b)j} 
  \left( S( z_{a+b}^{n-k-j} ) * z_{c}  (z_{a} z_{b})^{k} \right) \displaybreak[0] \\
 &\quad+ \sum_{k=0}^{n-1} \sum_{j=1}^{n-k} 
  \left\{
  z_{c} \left( z_{(a+b)j} S( z_{a+b}^{n-k-j} ) * (z_{a} z_{b})^{k} \right) 
  + z_{(a+b)j+c} \left( S( z_{a+b}^{n-k-j} ) * (z_{a} z_{b})^{k} \right)  
  \right\} \displaybreak[0]\\
 &\quad+ \sum_{k=1}^{n-1} \sum_{j=1}^{n-k} z_{a} 
  \left\{ 
  z_{(a+b)j} S( z_{a+b}^{n-k-j}) * 
  \left( \sum_{i=1}^{k} z_{b} A_{i-1,k-i} + \sum_{i=0}^{k-1} B_{i,k-1-i} \right) 
  \right\} \\
 &\quad+ \sum_{k=1}^{n-1} \sum_{j=1}^{n-k} z_{(a+b)j+a} 
  \left\{ 
  S( z_{a+b}^{n-k-j}) * 
  \left( \sum_{i=1}^{k} z_{b} A_{i-1,k-i} + \sum_{i=0}^{k-1} B_{i,k-1-i} \right) 
  \right\} 
  \displaybreak[0] \\
 &\quad+ \sum_{i=0}^{n} A_{i,n-i} + \sum_{i=0}^{n-1} z_{a} B_{i,n-1-i} \displaybreak[0] \\
 &\stackrel{(\ref{eq:7})}{=}
  \sum_{k=1}^{n-1} \sum_{j=1}^{n-k} z_{(a+b)j} 
  \left\{ 
  S( z_{a+b}^{n-k-j}) * 
  \left( \sum_{i=0}^{k} A_{i,k-i} + \sum_{i=0}^{k-1} z_{a} B_{i,k-1-i} \right) 
 \right\} 
  \displaybreak[0] \\
 &\quad + \sum_{j=1}^{n} z_{(a+b)j} 
  \left( S( z_{a+b}^{n-j} ) * z_{c} \right) \displaybreak[0] \\
 &\quad+ z_{c} \sum_{k=0}^{n-1} S( z_{a+b}^{n-k} ) * (z_{a} z_{b})^{k}  
  + \sum_{j=1}^{n} z_{(a+b)j+c} \sum_{k=0}^{n-j} 
  S( z_{a+b}^{n-k-j} ) * (z_{a} z_{b})^{k} \displaybreak[0]\\
 &\quad+ \sum_{k=1}^{n-1} \sum_{j=0}^{n-k} z_{(a+b)j+a} 
  \left\{ 
  S( z_{a+b}^{n-k-j}) * 
  \left( \sum_{i=1}^{k} z_{b} A_{i-1,k-i} + \sum_{i=0}^{k-1} B_{i,k-1-i} \right) 
  \right\} 
  \displaybreak[0] \\
 &\quad+ \sum_{i=0}^{n} A_{i,n-i} + \sum_{i=0}^{n-1} z_{a} B_{i,n-1-i} \displaybreak[0]\\
 &\stackrel{(\ref{eq:2})}{=}
  \sum_{k=0}^{n-1} \sum_{j=1}^{n-k} z_{(a+b)j} 
  \left\{ 
  S( z_{a+b}^{n-k-j}) *  
  \left( \sum_{i=0}^{k} A_{i,k-i} + \sum_{i=0}^{k-1} z_{a} B_{i,k-1-i} \right) 
  \right\} 
  \displaybreak[0] \\ 
 &\quad+ z_{c} S( (z_{a} z_{b})^{n} ) - z_{c} (z_{a} z_{b})^{n} 
  + \sum_{j=1}^{n} z_{(a+b)j+c} S( (z_{a} z_{b})^{n-j} ) \displaybreak[0]\\
 &\quad+ \sum_{k=1}^{n-1} \sum_{j=1}^{n+1-k} z_{(a+b)(j-1)+a} 
  \left\{ 
  S( z_{a+b}^{n-k-j+1}) * 
  \left( \sum_{i=0}^{k-1} z_{b} A_{i,k-1-i} + \sum_{i=0}^{k-1} B_{i,k-1-i} \right) 
  \right\} 
  \displaybreak[0] \\
 &\quad+ \sum_{i=0}^{n} A_{i,n-i} + \sum_{i=0}^{n-1} z_{a} B_{i,n-1-i} \displaybreak[0]\\
 &= \sum_{k=0}^{n-1} \sum_{j=1}^{n-k} z_{(a+b)j} 
  \left\{ 
  S( z_{a+b}^{n-k-j}) * 
  \left( \sum_{i=0}^{k} A_{i,k-i} + \sum_{i=0}^{k-1} z_{a} B_{i,k-1-i} \right) 
  \right\} 
  \displaybreak[0] \\
 &\quad+ \sum_{j=0}^{n} z_{(a+b)j+c} S( (z_{a} z_{b})^{n-j} )   \\
 &\quad+ \sum_{k=0}^{n-2} \sum_{j=1}^{n-k} z_{(a+b)(j-1)+a} 
  \left\{ 
  S( z_{a+b}^{n-k-j}) * 
  \left( \sum_{i=0}^{k} z_{b} A_{i,k-i} + \sum_{i=0}^{k} B_{i,k-i} \right) 
  \right\}  
  \displaybreak[0]\\
 &\quad+ 
  \sum_{i=1}^{n} A_{i,n-i} + \sum_{i=0}^{n-1} z_{a} B_{i,n-1-i} \displaybreak[0] \\ 
 &= \sum_{j=0}^{n} z_{(a+b)j+c} S( (z_{a} z_{b})^{n-j} ) 
  \displaybreak[0]   \\
 &\quad+ \sum_{k=0}^{n-1} \sum_{j=1}^{n-k} z_{(a+b)j} 
  \left\{ 
  S( z_{a+b}^{n-k-j}) * 
  \left( \sum_{i=0}^{k} A_{i,k-i} + \sum_{i=0}^{k-1} z_{a} B_{i,k-1-i} \right) 
  \right\} 
  \displaybreak[0] \\
 &\quad+ \sum_{k=0}^{n-1} \sum_{j=1}^{n-k} z_{(a+b)(j-1)+a} 
  \left\{ 
  S( z_{a+b}^{n-k-j}) * 
  \left( \sum_{i=0}^{k} z_{b} A_{i,k-i} + \sum_{i=0}^{k} B_{i,k-i} \right) 
  \right\}.
\end{align*}

Therefore, we have
\begin{align*}
 \lefteqn{ 2 \sum_{k=0}^{n} z_{(a+b)k+c} * S( (z_{a} z_{b})^{n-k} ) } 
 \\
&\quad\quad
 - \sum_{k=0}^{n} S( z_{a+b}^{n-k} ) * 
 \left\{ \sum_{i=0}^{k} A_{i,k-i} + \sum_{i=0}^{k-1} z_{a} B_{i,k-1-i} \right\}   
 \\
&= \sum_{k=0}^{n} z_{(a+b)k+c} S( (z_{a} z_{b})^{n-k} ) \displaybreak[0]  \\
&\quad + 2\sum_{k=0}^{n-1} \sum_{j=1}^{n-k} 
 \left\{ z_{(a+b)(k+j-1)+a+c} S( z_{b} (z_{a} z_{b})^{n-k-j}) 
 + z_{(a+b)(k+j)+c} S( (z_{a} z_{b})^{n-k-j}) \right\} \displaybreak[0]  \\
&\quad + \sum_{j=1}^{n} z_{(a+b)j} 
  \left\{ 
  2 \sum_{k=0}^{n-j} z_{(a+b)k+c} * S( (z_{a} z_{b})^{n-j-k} ) \right.\\
&\left. \qquad\qquad\qquad\qquad
  - \sum_{k=0}^{n-j} S( z_{a+b}^{n-j-k} ) * 
  \left( \sum_{i=0}^{k} A_{i,k-i} + \sum_{i=0}^{k-1} z_{a} B_{i,k-1-i} \right) 
  \right\} 
  \displaybreak[0]  \\
 &\quad +\sum_{j=1}^{n} z_{(a+b)(j-1)+a} 
  \left\{ 
  2 \sum_{k=0}^{n-j} z_{(a+b)k+c} * S( z_{b} (z_{a} z_{b})^{n-j-k} ) \right. \\
 &\left. \qquad\qquad\qquad\qquad 
  - \sum_{k=0}^{n-j} S( z_{a+b}^{n-j-k} ) * 
  \left( \sum_{i=0}^{k} z_{b} A_{i,k-i} + \sum_{i=0}^{k} B_{i,k-i} \right) 
  \right\} 
  \displaybreak[0]  \\
 &= \sum_{k=0}^{n} z_{(a+b)k+c} S( (z_{a} z_{b})^{n-k} )  \displaybreak[0]\\
 &\quad + 2 \sum_{k=0}^{n-1} \sum_{j=k+1}^{n} 
  \left\{ z_{(a+b)(j-1)+a+c} S( z_{b} (z_{a} z_{b})^{n-j} )
  + z_{(a+b)j+c} S( (z_{a} z_{b})^{n-j} ) \right\} 
  \displaybreak[0]\\
 &\quad + 
  \sum_{j=1}^{n} z_{(a+b)j} 
  \left\{ \sum_{k=0}^{n-j} S( A_{k,n-j-k}) 
  + \sum_{k=0}^{n-j-1} S(z_{a} B_{k,n-j-1-k} ) \right\} \\
 &\quad + 
  \sum_{j=1}^{n} z_{(a+b)(j-1)+a} 
  \left\{ \sum_{k=0}^{n-j} S( z_{b} A_{k,n-j-k} ) 
  + \sum_{k=0}^{n-j} S( B_{k,n-j-k} ) \right\} \\
 & \qquad \mathrm{\big( by \; induction \; hypothesis \big) }
  \displaybreak[0]\\
 &\stackrel{(\ref{eq:12})}{=}
  \sum_{k=0}^{n} S( A_{k,n-k} ) + \sum_{k=0}^{n-1} S( z_{a} B_{k,n-1-k} ). 
\end{align*}
Hence (\ref{eq:10}) is true for $n$. Next we prove (\ref{eq:11}) for $n$.
\begin{align*}
\lefteqn{ \sum_{k=0}^{n} z_{(a+b)k+c} * S( z_{b} (z_{a} z_{b})^{n-k} )} \\
 &\stackrel{(\ref{eq:6})}{=}  
  \sum_{k=0}^{n-1} z_{(a+b)k+c} * 
  \left\{ \sum_{j=0}^{n-k} z_{(a+b)j+b} S( (z_{a} z_{b})^{n-k-j} ) 
  +\sum_{j=1}^{n-k} z_{(a+b)j} S( z_{b} (z_{a} z_{b})^{n-k-j} )  \right\} \displaybreak[0]\\
 &\quad + z_{(a+b)n+c} * z_{b} \displaybreak[0]\\
 &= \sum_{k=0}^{n-1} \sum_{j=0}^{n-k} 
   z_{(a+b)k+c} z_{(a+b)j+b} S( (z_{a} z_{b})^{n-k-j} ) \displaybreak[0]\\
 &\quad + \sum_{k=0}^{n-1} \sum_{j=0}^{n-k} z_{(a+b)j+b} 
  \left( z_{(a+b)k+c} * S( (z_{a} z_{b})^{n-k-j} ) \right) \displaybreak[0]\\
 &\quad + \sum_{k=0}^{n-1} \sum_{j=0}^{n-k} 
  z_{(a+b)(k+j)+b+c} S( (z_{a} z_{b})^{n-k-j} ) \displaybreak[0]\\
 &\quad + \sum_{k=0}^{n-1} \sum_{j=1}^{n-k} 
  z_{(a+b)k+c} z_{(a+b)j} S( z_{b} (z_{a} z_{b})^{n-k-j} ) \displaybreak[0]\\
 &\quad + \sum_{k=0}^{n-1} \sum_{j=1}^{n-k} z_{(a+b)j} 
  \left( z_{(a+b)k+c} * S( z_{b} (z_{a} z_{b})^{n-k-j} ) \right) \displaybreak[0]\\
 &\quad + \sum_{k=0}^{n-1} \sum_{j=1}^{n-k} 
  z_{(a+b)(k+j)+c} S( z_{b} (z_{a} z_{b})^{n-k-j} ) \displaybreak[0]\\
 &\quad + z_{(a+b)n+c} z_{b} + z_{b} z_{(a+b)n+c} + z_{(a+b)n+b+c} \displaybreak[0]\\
 &\stackrel{(\ref{eq:6})}{=}
  \sum_{k=0}^{n} z_{(a+b)k+c} S( z_{b} (z_{a} z_{b})^{n-k} ) \displaybreak[0]\\
 &\quad + \sum_{j=0}^{n} z_{(a+b)j+b} \sum_{k=0}^{n-j} 
  z_{(a+b)k+c} * S( (z_{a} z_{b})^{n-k-j} ) \displaybreak[0]\\
 &\quad + \sum_{j=1}^{n} z_{(a+b)j} \sum_{k=0}^{n-j} 
  z_{(a+b)k+c} * S( z_{b} (z_{a} z_{b})^{n-k-j} ) \displaybreak[0]\\
 &\quad + \sum_{k=0}^{n-1} \sum_{j=0}^{n-k} 
  z_{(a+b)(k+j)+b+c} S( (z_{a} z_{b})^{n-k-j} ) \displaybreak[0]\\
 &\quad + \sum_{k=0}^{n-1} \sum_{j=1}^{n-k} 
  z_{(a+b)(k+j)+c} S( z_{b} (z_{a} z_{b})^{n-k-j} ) \displaybreak[0]\\
 &\quad + z_{(a+b)n+b+c}.
\end{align*}
On the other hand, 
\begin{align*}
 \lefteqn{
 \sum_{k=0}^{n} S( z_{a+b}^{n-k} ) * 
 \left\{ \sum_{i=0}^{k} z_{b} A_{i,k-i} + \sum_{i=0}^{k} B_{i,k-i} \right\}
 } 
 \displaybreak[0]\\
 &= S( z_{a+b}^n ) * ( z_{b} z_{c} + z_{c} z_{b} )  \displaybreak[0]\\
 &\quad + 
  \sum_{k=1}^{n-1} S( z_{a+b}^{n-k} ) * 
  \left\{ \sum_{i=0}^{k} z_{b} A_{i,k-i} + \sum_{i=1}^{k} B_{i,k-i} \right\} 
  + \sum_{k=1}^{n-1} S( z_{a+b}^{n-k} ) * z_{c} (z_{b} z_{a})^{k} z_{b} \displaybreak[0]\\
 &\quad + 
  \sum_{i=0}^{n} z_{b} A_{i,n-i} + \sum_{i=0}^{n} B_{i,n-i} 
  \displaybreak[0]\\
 &\stackrel{(\ref{eq:7})}{=}
  \sum_{k=1}^{n-1} \sum_{j=1}^{n-k} z_{(a+b)j} S( z_{a+b}^{n-k-j} ) * 
  \left\{ \sum_{i=0}^{k} z_{b} A_{i,k-i} + \sum_{i=1}^{k} B_{i,k-i} \right\} 
  \displaybreak[0]\\
 &\quad + \sum_{k=0}^{n-1} \sum_{j=1}^{n-k} z_{(a+b)j} 
  S( z_{a+b}^{n-k-j}) * z_{c} (z_{b} z_{a})^{k} z_{b}
  + \sum_{j=1}^{n} z_{(a+b)j} S( z_{a+b}^{n-j}) * z_{b} z_{c} \displaybreak[0]\\
 &\quad + 
  \sum_{i=0}^{n} z_{b} A_{i,n-i}
  + \sum_{i=0}^{n} B_{i,n-i} 
  \displaybreak[0]\\
 &= \sum_{k=1}^{n-1} \sum_{j=1}^{n-k} z_{(a+b)j} 
  \left\{ 
  S( z_{a+b}^{n-k-j} ) * 
  \left( \sum_{i=0}^{k} z_{b} A_{i,k-i} + \sum_{i=1}^{k} B_{i,k-i} \right) 
  \right\} 
  \displaybreak[0]\\
 &\quad + 
  \sum_{k=1}^{n-1} \sum_{j=1}^{n-k} z_{b} 
  \left\{ 
  z_{(a+b)j} S( z_{a+b}^{n-k-j} ) 
  \left( \sum_{i=0}^{k} A_{i,k-i} + \sum_{i=1}^{k} z_{a} B_{i-1,k-i} \right)  
  \right\} 
  \displaybreak[0]\\
 &\quad + 
  \sum_{k=1}^{n-1} \sum_{j=1}^{n-k} z_{(a+b)j+b} 
  \left\{ 
  S( z_{a+b}^{n-k-j} ) * 
  \left( \sum_{i=0}^{k} A_{i,k-i} + \sum_{i=1}^{k} z_{a} B_{i-1,k-i} \right)  
  \right\} 
  \displaybreak[0]\\
 &\quad + \sum_{k=0}^{n-1} \sum_{j=1}^{n-k} z_{(a+b)j} 
  \left( S( z_{a+b}^{n-k-j}) * z_{c} (z_{b} z_{a})^{k} z_{b} \right) \displaybreak[0]\\
 &\quad + \sum_{k=0}^{n-1} \sum_{j=1}^{n-k} z_{c} 
  \left( z_{(a+b)j} S( z_{a+b}^{n-k-j}) * (z_{b} z_{a})^{k} z_{b} \right) \displaybreak[0]\\
 &\quad + \sum_{k=0}^{n-1} \sum_{j=1}^{n-k} z_{(a+b)j+c} 
  \left( S( z_{a+b}^{n-k-j}) * (z_{b} z_{a})^{k} z_{b} \right) \displaybreak[0]\\
 &\quad + 
  \sum_{j=1}^{n} z_{(a+b)j} 
  \left( S( z_{a+b}^{n-j}) * z_{b} z_{c}  \right) 
  + \sum_{j=1}^{n} z_{b} 
  \left( z_{(a+b)j} S( z_{a+b}^{n-j}) * z_{c} \right) \displaybreak[0]\\
 &\quad + 
  \sum_{j=1}^{n} z_{(a+b)j+b} 
  \left( S( z_{a+b}^{n-j}) * z_{c} \right)  
  + \sum_{i=0}^{n} z_{b} A_{i,n-i} + \sum_{i=0}^{n} B_{i,n-i} 
  \displaybreak[0]\\
 &=\sum_{k=1}^{n-1} \sum_{j=1}^{n-k} z_{(a+b)j} 
  \left\{ 
  S( z_{a+b}^{n-k-j} ) * 
  \left( \sum_{i=0}^{k} z_{b} A_{i,k-i} + \sum_{i=1}^{k} B_{i,k-i} \right) 
  \right\} 
  \displaybreak[0]\\
 &\quad + \sum_{k=0}^{n-1} \sum_{j=1}^{n-k} z_{(a+b)j} 
  \left( S( z_{a+b}^{n-k-j}) * z_{c} (z_{b} z_{a})^{k} z_{b} \right) 
  + \sum_{j=1}^{n} z_{(a+b)j} 
  \left( S( z_{a+b}^{n-j}) * z_{b} z_{c}  \right) \displaybreak[0]\\
 &\quad + 
  \sum_{k=1}^{n-1} \sum_{j=1}^{n-k} z_{b} 
  \left\{ 
  z_{(a+b)j} S( z_{a+b}^{n-k-j} ) * 
  \left( \sum_{i=0}^{k} A_{i,k-i} + \sum_{i=1}^{k} z_{a} B_{i-1,k-i} \right)  
  \right\} 
  \displaybreak[0]\\ 
 &\quad + 
  \sum_{k=1}^{n-1} \sum_{j=1}^{n-k} z_{(a+b)j+b} 
  \left\{ 
  S( z_{a+b}^{n-k-j} ) * 
  \left( \sum_{i=0}^{k} A_{i,k-i} + \sum_{i=1}^{k} z_{a} B_{i-1,k-i} \right)  
  \right\} 
  \displaybreak[0]\\
 &\quad + \sum_{j=1}^{n} 
  \left\{ 
  z_{b} \left( z_{(a+b)j} S( z_{a+b}^{n-j}) * z_{c} \right) 
  + z_{(a+b)j+b} \left( S( z_{a+b}^{n-j}) * z_{c} \right) 
  \right\} \displaybreak[0]\\
 &\quad + \sum_{k=0}^{n-1} \sum_{j=1}^{n-k} z_{c} 
  \left( z_{(a+b)j} S( z_{a+b}^{n-k-j}) * (z_{b} z_{a})^{k} z_{b} \right)  \\
 &\quad + \sum_{k=0}^{n-1} \sum_{j=1}^{n-k} z_{(a+b)j+c} 
  \left( S( z_{a+b}^{n-k-j}) * (z_{b} z_{a})^{k} z_{b} \right) \displaybreak[0]\\
 &\quad + 
  \sum_{i=0}^{n} z_{b} A_{i,n-i}
  + \sum_{i=0}^{n} B_{i,n-i} 
  \displaybreak[0]\\
 &\stackrel{(\ref{eq:7})}{=}
  \sum_{k=1}^{n-1} \sum_{j=1}^{n-k} z_{(a+b)j} 
  \left\{ 
  S( z_{a+b}^{n-k-j} ) * 
  \left( \sum_{i=0}^{k} z_{b} A_{i,k-i} + \sum_{i=0}^{k} B_{i,k-i} \right) 
  \right\} 
  \displaybreak[0]\\
 &\quad + \sum_{j=1}^{n} 
  \left\{ 
  z_{(a+b)j} \left( S( z_{a+b}^{n-j}) * z_{c} z_{b}  \right) 
  + z_{(a+b)j} \left( S( z_{a+b}^{n-j}) * z_{b} z_{c}  \right) 
  \right\} \displaybreak[0]\\
 &\quad + 
  \sum_{k=1}^{n-1} \sum_{j=0}^{n-k} z_{(a+b)j+b} 
  \left\{ 
  S( z_{a+b}^{n-k-j} ) * 
  \left( 
  \sum_{i=0}^{k} A_{i,k-i} 
  + \sum_{i=1}^{k} z_{a} B_{i-1,k-i} 
  \right)  
  \right\} 
  \displaybreak[0]\\ 
 &\quad + \sum_{j=0}^{n}  z_{(a+b)j+b} \left( S( z_{a+b}^{n-j}) * z_{c} \right)  
  \displaybreak[0]\\
 &\quad + 
  \sum_{k=0}^{n-1} \sum_{j=0}^{n-k} z_{(a+b)j+c} 
  \left( S( z_{a+b}^{n-k-j}) * (z_{b} z_{a})^{k} z_{b} \right) 
  \displaybreak[0]\\
 &\quad + 
  \sum_{i=0}^{n} z_{b} A_{i,n-i} 
  + \sum_{i=0}^{n} B_{i,n-i} 
  \displaybreak[0]\\
 &= \sum_{k=0}^{n-1} \sum_{j=1}^{n-k} z_{(a+b)j} 
  \left\{ 
  S( z_{a+b}^{n-k-j} ) * 
  \left( \sum_{i=0}^{k} z_{b} A_{i,k-i} + \sum_{i=0}^{k} B_{i,k-i} \right) 
  \right\} 
  \displaybreak[0]\\
 &\quad 
  + \sum_{k=0}^{n-1} \sum_{j=0}^{n-k} z_{(a+b)j+b} 
  \left\{ 
  S( z_{a+b}^{n-k-j} ) * 
  \left( \sum_{i=0}^{k} A_{i,k-i} + \sum_{i=1}^{k} z_{a} B_{i-1,k-i} \right)  
  \right\} 
  \displaybreak[0]\\
 &\quad + \sum_{j=0}^{n} z_{(a+b)j+c} \sum_{k=0}^{n-j}  
  S( z_{a+b}^{n-j-k}) * z_{b} (z_{a} z_{b})^{k} - z_{c} (z_{b} z_{a})^{n} z_{b} \displaybreak[0] \\
 &\quad + 
  \sum_{i=0}^{n} z_{b} A_{i,n-i} + \sum_{i=0}^{n} B_{i,n-i} 
  \displaybreak[0]\\
 &\stackrel{(\ref{eq:3})}{=}
  \sum_{k=0}^{n-1} \sum_{j=1}^{n-k} z_{(a+b)j} 
  \left\{ 
  S( z_{a+b}^{n-k-j} ) * 
  \left( \sum_{i=0}^{k} z_{b} A_{i,k-i} + \sum_{i=0}^{k} B_{i,k-i} \right) 
  \right\} 
  \displaybreak[0]\\
 &\quad + 
  \sum_{k=0}^{n-1} \sum_{j=0}^{n-k} z_{(a+b)j+b} 
  \left\{ 
  S( z_{a+b}^{n-k-j} ) * 
  \left( \sum_{i=0}^{k} A_{i,k-i} + \sum_{i=1}^{k} z_{a} B_{i-1,k-i} \right)  
  \right\} 
  \displaybreak[0]\\
 &\quad + \sum_{j=0}^{n} z_{(a+b)j+c} S( z_{b} (z_{a} z_{b})^{n-j} )  
  \displaybreak[0]\\
 &\quad + 
  \sum_{i=0}^{n} z_{b} A_{i,n-i} + \sum_{i=1}^{n} B_{i,n-i} 
  \displaybreak[0]\\
 &=\sum_{j=1}^{n} \sum_{k=0}^{n-j} z_{(a+b)j} 
  \left\{ 
  S( z_{a+b}^{n-k-j} ) * 
  \left( \sum_{i=0}^{k} z_{b} A_{i,k-i} + \sum_{i=0}^{k} B_{i,k-i} \right) 
  \right\} 
  \displaybreak[0]\\
 &\quad + 
  \sum_{j=0}^{n} \sum_{k=0}^{n-j} z_{(a+b)j+b} 
  \left\{ 
  S( z_{a+b}^{n-k-j} ) * 
  \left( \sum_{i=0}^{k} A_{i,k-i} + \sum_{i=1}^{k} z_{a} B_{i-1,k-i} \right)  
  \right\} 
  \displaybreak[0]\\
 &\quad + \sum_{k=0}^{n} z_{(a+b)k+c} S( z_{b} (z_{a} z_{b})^{n-k} ).
\end{align*}
Therefore, we have
\begin{align*}
 \lefteqn{
 2 \sum_{k=0}^{n} z_{(a+b)k+c} * S( z_{b} (z_{a} z_{b})^{n-k} ) 
 }
 \\
&\quad\quad
 - \sum_{k=0}^{n} S( z_{a+b}^{n-k} ) * 
 \left\{ \sum_{i=0}^{k} z_{b} A_{i,k-i} + \sum_{i=0}^{k} B_{i,k-i} \right\} 
 \\ 
 &= \sum_{k=0}^{n} z_{(a+b)k+c} S( z_{b} (z_{a} z_{b})^{n-k} ) \displaybreak[0]\\
 &\quad + 2 \sum_{k=0}^{n-1} \sum_{j=0}^{n-k} z_{(a+b)(k+j)+b+c} S( (z_{a} z_{b})^{n-k-j} ) 
  \displaybreak[0]\\
 &\quad + 2 \sum_{k=0}^{n-1} \sum_{j=1}^{n-k} z_{(a+b)(k+j)+c} S( z_{b} (z_{a} z_{b})^{n-k-j} ) 
  + 2 z_{(a+b)n+b+c} \displaybreak[0]\\ 
 &\quad + \sum_{j=1}^{n} z_{(a+b)j} 
  \left\{ 2 \sum_{k=0}^{n-j} z_{(a+b)k+c} * S( z_{b} (z_{a} z_{b})^{n-j-k}) \right. \\
 &\qquad\qquad\qquad\qquad 
  \left. - \sum_{k=0}^{n-j} S( z_{a+b}^{n-j-k}) * 
  \left( \sum_{i=0}^{k} z_{b} A_{i,k-i} +\sum_{i=0}^{k} B_{i,k-i} \right) 
  \right\} 
  \displaybreak[0]\\
 &\quad + \sum_{j=0}^{n} z_{(a+b)j+b} 
 \left\{ 2 \sum_{k=0}^{n-j} z_{(a+b)k+c} * S( (z_{a} z_{b})^{n-j-k}) \right. \\
 &\qquad\qquad\qquad\qquad 
  \left. - \sum_{k=0}^{n-j} S( z_{a+b}^{n-j-k}) * 
  \left( \sum_{i=0}^{k} A_{i,k-i} + \sum_{i=1}^{k} z_{a} B_{i-1,k-i} \right) 
  \right\} 
  \displaybreak[0]\\
 &= \sum_{k=0}^{n} z_{(a+b)k+c} S( z_{b} (z_{a} z_{b})^{n-k} ) \displaybreak[0]\\ 
 &\quad + 2 \sum_{k=0}^{n-1} \sum_{j=0}^{n-k} z_{(a+b)(k+j)+b+c} S( (z_{a} z_{b})^{n-k-j} ) 
  \displaybreak[0]\\
 &\quad + 2 \sum_{k=0}^{n-1} \sum_{j=1}^{n-k} z_{(a+b)(k+j)+c} S( z_{b} (z_{a} z_{b})^{n-k-j} ) 
  + 2 z_{(a+b)n+b+c} \displaybreak[0]\\
 &\quad + 
  \sum_{j=1}^{n} z_{(a+b)j} 
  \left\{ \sum_{k=0}^{n-j} S( z_{b} A_{k,n-j-k} ) + \sum_{k=0}^{n-j} S( B_{k,n-j-k} ) \right\} 
  \displaybreak[0]\\
 &\quad + 
  \sum_{j=0}^{n} z_{(a+b)j+b} 
  \left\{ \sum_{k=0}^{n-j} S( A_{k,n-j-k}) + \sum_{k=0}^{n-j-1} S(z_{a} B_{k,n-j-1-k} ) \right\} 
  \displaybreak[0]\\
 & \qquad \mathrm{\big( by \; induction \; hypothesis \; and \; (\ref{eq:10}) \; for} \; n \big) 
  \displaybreak[0]\\
 &\stackrel{(\ref{eq:13})}{=} 
  \sum_{k=0}^{n} S( z_{b} A_{k,n-k} ) + \sum_{k=0}^{n} S( B_{k,n-k} ). 
\end{align*}
\end{proof}

\begin{proof}[Proof of Theorem \ref{thmc}]
By (\ref{eq:8}), we obtain
\[\sum_{\vec{s}_{2n}\in I_{2n}} \zeta^{*}(\vec{s}_{2n}) 
=2\sum_{k=0}^{n} \zeta(4k+2) \zeta^{*}(\underbrace{3,1,\ldots,3,1}_{2n-2k})
-\sum_{k=0}^{n} \zeta^{*}(\underbrace{4,\ldots,4}_{n-k}) 
 \sum_{\vec{s}_{2k}\in I_{2k}} \zeta(\vec{s}_{2k}).\]
Hence, we have the assertion by Theorem \ref{thm1}, Theorem \ref{thm3}, 
Theorem \ref{thma} and Theorem \ref{thmb}.
\end{proof}

\section*{Acknowledgements}
The author would like to thank Professor Masanobu Kaneko for many useful advices. 
Also, he wants to thank Kentaro Ihara, Jun Kajikawa and Tatsushi Tanaka for 
helpful comments and suggestions.

\end{document}